\providecommand{\U}[1]{\protect\rule{.1in}{.1in}}
\newtheorem{theorem}{Theorem}
\theoremstyle{plain}
\newtheorem{corollary}{Corollary}
\newtheorem{lemma}{Lemma}
\newtheorem{problem}{Problem}
\newtheorem{remark}{Remark}
\numberwithin{equation}{section}
\begin{document}
\title{A survey of the Hornich-Hlawka inequality}
\author{Dan-\c{S}tefan Marinescu}
\address{National College "Iancu de Hunedoara", Hunedoara, Romania}
\email{marinescuds@gmail.com}
\author{Constantin P. Niculescu}
\address{Department of Mathematics, University of Craiova, Craiova 200585, Romania}
\email{constantin.p.niculescu@gmail.com}
\thanks{Corresponding author: Constantin P. Niculescu}
\date{June 30, 2024}
\subjclass[2000]{Primary 46B04, 46B20; Secondary 26D15}
\keywords{Hornich-Hlawka inequality, completely monotone function, invariant measure,
$L^{p}$ space, finite representability}

\begin{abstract}
In this survey, we review the many faces of the Hornich-Hlawka inequality.
Several open problems that seem of utmost interest are mentioned.

\end{abstract}
\maketitle

\section{Introduction}

The Hornich-Hlawka inequality, in its original form, asserts that%
\begin{equation}
\left\Vert \mathbf{x}\right\Vert _{2}+\left\Vert \mathbf{y}\right\Vert
_{2}+\left\Vert \mathbf{z}\right\Vert _{2}+\left\Vert \mathbf{x}%
+\mathbf{y}+\mathbf{z}\right\Vert _{2}\geq\left\Vert \mathbf{x}+\mathbf{y}%
\right\Vert _{2}+\left\Vert \mathbf{y}+\mathbf{z}\right\Vert _{2}+\left\Vert
\mathbf{z}+\mathbf{x}\right\Vert _{2} \tag{$HH$}\label{HH}%
\end{equation}
for every triplet $\mathbf{x},\mathbf{y},\mathbf{z}$ of vectors in the
Euclidean space $\mathbb{R}^{N}$ and thus in any Hilbert space, real or
complex. Here (and what follows), $\left\Vert \cdot\right\Vert _{2}$ denotes
the Hilbertian norm.

The story of this inequality started ca. 80 years ago, when H. Hornich
\cite{Ho1942} mentioned it\ as a consequence of his main result. He referred
to E. Hlawka for a direct algebraic argument saying that \emph{Für den
Spezialfall} $m=1$, $n=2$ \emph{hat Herr HLAWKA mir einen rein algebraischen
Beweis angegeben} (in translation: For the special case $m=1$, $n=2$, Mr.
Hlawka gave me one purely algebraic proof). Hlawka's argument is both short
and unexpected. It took into account the identity,%
\begin{equation}
\left\Vert \mathbf{x}\right\Vert _{2}^{2}+\left\Vert \mathbf{y}\right\Vert
_{2}^{2}+\left\Vert \mathbf{z}\right\Vert _{2}^{2}+\left\Vert \mathbf{x}%
+\mathbf{y}+\mathbf{z}\right\Vert _{2}^{2}=\left\Vert \mathbf{x}%
+\mathbf{y}\right\Vert _{2}^{2}+\left\Vert \mathbf{y}+\mathbf{z}\right\Vert
_{2}^{2}+\left\Vert \mathbf{z}+\mathbf{x}\right\Vert _{2}^{2}, \tag{$%
Fr$}\label{Fr}%
\end{equation}
(used by M. Fréchet \cite{Fr1935} to characterize the inner product spaces) to
derive the relation%
\begin{multline*}
\left(  \left\Vert \mathbf{x}\right\Vert _{2}+\left\Vert \mathbf{y}\right\Vert
_{2}+\left\Vert \mathbf{z}\right\Vert _{2}+\left\Vert \mathbf{x}%
+\mathbf{y}+\mathbf{z}\right\Vert _{2}-\left\Vert \mathbf{x}+\mathbf{y}%
\right\Vert _{2}-\left\Vert \mathbf{y}+\mathbf{z}\right\Vert _{2}-\left\Vert
\mathbf{z}+\mathbf{x}\right\Vert _{2}\right) \\
\times\left(  \left\Vert \mathbf{x}\right\Vert _{2}+\left\Vert \mathbf{y}%
\right\Vert _{2}+\left\Vert \mathbf{z}\right\Vert _{2}+\left\Vert
\mathbf{x}+\mathbf{y}+\mathbf{z}\right\Vert _{2}\right) \\
=(\left\Vert \mathbf{x}\right\Vert _{2}+\left\Vert \mathbf{y}\right\Vert
_{2}-\left\Vert \mathbf{x}+\mathbf{y}\right\Vert _{2})\left(  \left\Vert
\mathbf{z}\right\Vert _{2}+\left\Vert \mathbf{x}+\mathbf{y}+\mathbf{z}%
\right\Vert _{2}-\left\Vert \mathbf{x}+\mathbf{y}\right\Vert _{2}\right) \\
+(\left\Vert \mathbf{y}\right\Vert _{2}+\left\Vert \mathbf{z}\right\Vert
_{2}-\left\Vert \mathbf{y}+\mathbf{z}\right\Vert _{2})\left(  \left\Vert
\mathbf{x}\right\Vert _{2}+\left\Vert \mathbf{x}+\mathbf{y}+\mathbf{z}%
\right\Vert _{2}-\left\Vert \mathbf{y}+\mathbf{z}\right\Vert _{2}\right) \\
+(\left\Vert \mathbf{z}\right\Vert _{2}+\left\Vert \mathbf{x}\right\Vert
_{2}-\left\Vert \mathbf{z}+\mathbf{x}\right\Vert _{2})\left(  \left\Vert
\mathbf{y}\right\Vert _{2}+\left\Vert \mathbf{x}+\mathbf{y}+\mathbf{z}%
\right\Vert _{2}-\left\Vert \mathbf{z}+\mathbf{x}\right\Vert _{2}\right)  .
\end{multline*}
The proof ends by noticing that the right hand side is nonnegative due to the
triangle inequality.

The weighted form of the Hornich-Hlawka's inequality is a direct consequence
of the unweighted form $(HH):$
\begin{multline*}
\frac{\lambda\left\Vert \mathbf{x}\right\Vert _{2}+\mu\left\Vert
\mathbf{y}\right\Vert _{2}+\nu\left\Vert \mathbf{z}\right\Vert _{2}}%
{\lambda+\mu+\nu}+\left\Vert \frac{\lambda\mathbf{x}+\mu\mathbf{y}%
+\nu\mathbf{z}}{\lambda+\mu+\nu}\right\Vert _{2}\\
\geq\frac{1}{\lambda+\mu+\nu}\left[  \left\Vert \lambda\mathbf{x}%
+\mu\mathbf{y}\right\Vert _{2}+\left\Vert \mu\mathbf{y}+\nu\mathbf{z}%
\right\Vert _{2}+\left\Vert \nu\mathbf{z}+\lambda\mathbf{x}\right\Vert
_{2}\right] \\
=\frac{1}{\lambda+\mu+\nu}\left[  \left(  \lambda+\mu\right)  \left\Vert
\frac{\lambda\mathbf{x}+\mu\mathbf{y}}{\lambda+\mu}\right\Vert _{2}\right. \\
\left.  +\left(  \mu+\nu\right)  \left\Vert \frac{\mu\mathbf{y}+\nu\mathbf{z}%
}{\mu+\nu}\right\Vert _{2}+\left(  \nu+\lambda\right)  \left\Vert \frac
{\nu\mathbf{z}+\lambda\mathbf{x}}{\nu+\lambda}\right\Vert _{2}\right]  ,
\end{multline*}
for all $\mathbf{x},\mathbf{y},\mathbf{z\in}\mathbb{R}^{N}$ and all scalars
$\lambda,\mu,\nu>0.$

D. M. Smiley and M. F. Smiley \cite{SmSm} (as well as A. Simon and P. Volkmann
\cite{SV}) used the terminology of \emph{quadrilateral space} for what we call
here a Hornich-Hlawka space. A simple geometric argument for their terminology
is offered by the solution of R. Tauraso to the problem \cite{Giu}. He proved
that in the Euclidean plane this inequality is equivalent to the following
property of the quadrilaterals (convex or not): \emph{Let} $ABCD$ \emph{be a
quadrilateral}, $E$ \emph{the midpoint of} $AC$, \emph{and} $F$ \emph{the
midpoint of} $BD$. \emph{Then}%
\[
\left\Vert AB\right\Vert _{2}+\left\Vert BC\right\Vert _{2}+\left\Vert
CD\right\Vert _{2}+\left\Vert DA\right\Vert _{2}\geq\left\Vert AC\right\Vert
_{2}+\left\Vert BD\right\Vert _{2}+2\left\Vert EF\right\Vert _{2}.
\]
The proof extends verbatim to all Euclidean spaces $\mathbb{R}^{N}$ with
$N\geq2.$ Even more, according to Remark \ref{rem_2dim}, in the 2-dimensional
case, the Euclidean norm can be replaced by any other norm on $\mathbb{R}%
^{2}.$

For the full information of our readers, we have included some information
about the lives and achievements of Hornich and Hlawka in the Bibliography
section. See \cite{Hor} and \cite{Ti2009}.

Stigler's law of eponymy states that no scientific discovery is named after
its original discoverer and the case of this inequality is not an exception.
Indeed, some authors (\cite{Levi}, \cite{Res}) refer to (HH) as the
\emph{Hornich-Hlawka inequality}, while others (\cite{Mit1970}, \cite{MPF}
\cite{TTW}, \cite{Wada}) to the \emph{Hlawka inequality. }We adopt here the
first variant, which seems better motivated. No matter how we refer to the
inequality (HH), a fact is certain: this is a perennial subject which continue
to attract a great deal of attention and our paper is aimed to emphasize the
mathematics around it. To do this we survey the known results and point out
the wealth of rather important open problems that are out there, at least some
of which should be fairly elementary for someone with the right background.

\section{Hornich-Hlawka inequality by transfer from an $L^{1}$ space}

Looking at the aforementioned proof of the (HH) inequality, the reader will
inevitably react as G. Pólya noted in the Preface of his influential book How
to Solve It : \emph{Yes, the solution seems to work, it appears to be correct.
But how is it possible to invent such a solution? }See \cite{Pol}, p. vi.

Many norm inequalities in real analysis follow from appropriate inequalities
for real numbers by integration, after substituting the real variables with
integrable functions. Think for example at the standard proof of Hölder's
inequality, which appears as a consequence of Young's inequality. See
\cite{NP2018}, p. 13.

Surprisingly, the same recipe works in the case of Hornich-Hlawka inequality.
Indeed, by taking into account the inequality%
\begin{equation}
|x|+|y|+|z|+|x+y+z|\geq|x+y|+|y+z|+|z+x|\text{\quad for all }x,y,z\in
\mathbb{R}, \label{HHReal}%
\end{equation}
which is an easy consequence of the ordering properties of $\mathbb{R},$ one
can easily prove the Hornich-Hlawka inequality not only in the case of Hilbert
spaces but also for a large variety of spaces. Notice also that the inequality
\ref{HHReal} plays a central role in the original proof of Popoviciu's
inequality. See \cite{Pop1965}.

For convenience, we will call a (real or complex) Banach space
$E=(E,\left\Vert \cdot\right\Vert )$ a \emph{Hornich-Hlawka space} if its norm
verifies the inequality%
\begin{equation}
\left\Vert \mathbf{x}\right\Vert +\left\Vert \mathbf{y}\right\Vert +\left\Vert
\mathbf{z}\right\Vert +\left\Vert \mathbf{x}+\mathbf{y}+\mathbf{z}\right\Vert
\geq\left\Vert \mathbf{x}+\mathbf{y}\right\Vert +\left\Vert \mathbf{y}%
+\mathbf{z}\right\Vert +\left\Vert \mathbf{z}+\mathbf{x}\right\Vert
\label{HHineq}%
\end{equation}
for every triplet of elements $\mathbf{x},\mathbf{y},\mathbf{z}$ of $E.$

An easy but important remark is the membership of every $L^{1}$ space to this
class of Banach spaces. The next result refers to the more sophisticated case
of (Bochner integrable) vector-valued functions. See the book of Diestel and
Uhl \cite{DU}, Chapter II, Section 2, for a quick introduction to this theory
of integration.

\begin{lemma}
\label{lemL1} Let $E$ be a Hornich-Hlawka space and $(\Omega,\Sigma,\mu)$ a
measure space. Then the Banach space $L_{E}^{1}(\Omega,\Sigma,\mu),$ of all
Bochner integrable functions $f:\Omega\rightarrow E$ with respect to $\mu,$ is
also a Hornich-Hlawka space.
\end{lemma}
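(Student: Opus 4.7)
The plan is to transfer the Hornich-Hlawka inequality from $E$ to $L^{1}_{E}(\Omega,\Sigma,\mu)$ pointwise and then integrate. Recall that the norm on $L^{1}_{E}(\Omega,\Sigma,\mu)$ is $\|f\|_{1}=\int_{\Omega}\|f(\omega)\|\,d\mu(\omega)$, and that if $f$ is Bochner integrable then the scalar function $\omega\mapsto\|f(\omega)\|$ is measurable and belongs to $L^{1}(\mu)$.

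First I would fix three arbitrary elements $f,g,h\in L^{1}_{E}(\Omega,\Sigma,\mu)$ and consider the six scalar functions $\omega\mapsto\|f(\omega)\|$, $\omega\mapsto\|g(\omega)\|$, $\omega\mapsto\|h(\omega)\|$, $\omega\mapsto\|f(\omega)+g(\omega)\|$, $\omega\mapsto\|g(\omega)+h(\omega)\|$, $\omega\mapsto\|h(\omega)+f(\omega)\|$, together with $\omega\mapsto\|f(\omega)+g(\omega)+h(\omega)\|$. Each of these is an element of $L^{1}(\mu)$, because Bochner integrability is preserved by finite linear combinations and the pointwise norm of a Bochner integrable function is in $L^{1}(\mu)$.

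Next, since $E$ is a Hornich-Hlawka space, for every $\omega\in\Omega$ the triple $(f(\omega),g(\omega),h(\omega))\in E^{3}$ satisfies
\[
\|f(\omega)\|+\|g(\omega)\|+\|h(\omega)\|+\|f(\omega)+g(\omega)+h(\omega)\|
\]
\[
\geq\|f(\omega)+g(\omega)\|+\|g(\omega)+h(\omega)\|+\|h(\omega)+f(\omega)\|.
\]
This is a pointwise inequality between nonnegative $\mu$-integrable functions on $\Omega$, which holds for every $\omega$ (not merely a.e.), so measurability issues do not arise. Integrating both sides with respect to $\mu$ and using the linearity of the Bochner--Lebesgue integral yields
\[
\|f\|_{1}+\|g\|_{1}+\|h\|_{1}+\|f+g+h\|_{1}\geq\|f+g\|_{1}+\|g+h\|_{1}+\|h+f\|_{1},
\]
which is exactly the Hornich-Hlawka inequality in $L^{1}_{E}(\Omega,\Sigma,\mu)$.

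The argument is essentially routine once the pointwise viewpoint is adopted; the only place where care is needed is ensuring that all seven norm functions are genuinely measurable and integrable, but this is a standard fact from the Diestel--Uhl reference already cited. In particular, no separability or completeness hypothesis on $E$ beyond that already built into being a Hornich-Hlawka Banach space is required.
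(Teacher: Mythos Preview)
Your proof is correct and follows essentially the same approach as the paper: apply the Hornich--Hlawka inequality in $E$ pointwise to $(f(\omega),g(\omega),h(\omega))$ and integrate. The paper's version is slightly terser and omits the measurability/integrability remarks you included, but the argument is identical.
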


In particular, every space $L^{1}(\Omega,\Sigma,\mu)=L_{\mathbb{R}}^{1}%
(\Omega,\Sigma,\mu)$ is Hornich-Hlawka.

\begin{proof}
Indeed, for every\ triplet of functions $\mathbf{f},\mathbf{g},\mathbf{h}\in
L_{E}^{1}(\Omega,\Sigma,\mu),$%
\begin{multline*}
\left\Vert \mathbf{f}(\omega)\right\Vert +\left\Vert \mathbf{g}(\omega
)\right\Vert +\left\Vert \mathbf{h}(\omega)\right\Vert +\left\Vert
\mathbf{f}(\omega)+\mathbf{g}(\omega)+\mathbf{h}(\omega)\right\Vert \\
\geq\left\Vert \mathbf{f}(\omega)+\mathbf{g}(\omega)\right\Vert +\left\Vert
\mathbf{g}(\omega)+\mathbf{h}(\omega)\right\Vert +\left\Vert \mathbf{h}%
(\omega)+\mathbf{f}(\omega)\right\Vert \text{\quad}%
\end{multline*}
for all $\omega\in\Omega,$ whence by integration we obtain%
\begin{multline*}
\int_{\Omega}\left\Vert \mathbf{f}(\omega)\right\Vert \mathrm{d}\mu
+\int_{\Omega}\left\Vert \mathbf{g}(\omega)\right\Vert \mathrm{d}\mu
+\int_{\Omega}\left\Vert \mathbf{h}(\omega)\right\Vert \mathrm{d}\mu
+\int_{\Omega}\left\Vert \mathbf{f}(\omega)+\mathbf{g}(\omega)+\mathbf{h}%
(\omega)\right\Vert \mathrm{d}\mu\\
\geq\int_{\Omega}\left\Vert \mathbf{f}(\omega)+\mathbf{g}(\omega)\right\Vert
\mathrm{d}\mu+\int_{\Omega}\left\Vert \mathbf{g}(\omega)+\mathbf{h}%
(\omega)\right\Vert \mathrm{d}\mu+\int_{\Omega}\left\Vert \mathbf{h}%
(\omega)+\mathbf{f}(\omega)\right\Vert \mathrm{d}\mu.
\end{multline*}
that is,
\[
\left\Vert \mathbf{f}\right\Vert _{L_{E}^{1}}+\left\Vert \mathbf{g}\right\Vert
_{L_{E}^{1}}+\left\Vert \mathbf{h}\right\Vert _{L_{E}^{1}}+\left\Vert
\mathbf{f}+\mathbf{g}+\mathbf{h}\right\Vert _{L_{E}^{1}}\geq\left\Vert
\mathbf{f}+\mathbf{g}\right\Vert _{L_{E}^{1}}+\left\Vert \mathbf{g}%
+\mathbf{h}\right\Vert _{L_{E}^{1}}+\left\Vert \mathbf{h}+\mathbf{f}%
\right\Vert _{L_{E}^{1}}.
\]

\end{proof}

\begin{corollary}
\label{corL1}Under the hypotheses of Lemma \emph{\ref{lemL1}}, every Banach
space that embeds linearly and isometrically into a space $L_{E}^{1}%
(\Omega,\Sigma,\mu)$ is a Hornich-Hlawka space.
\end{corollary}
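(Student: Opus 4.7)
The plan is a direct transfer argument via the isometric embedding, so there is essentially no obstacle beyond being careful about notation. I first unpack the hypothesis: let $F$ denote the Banach space in question, and let $T\colon F\rightarrow L_{E}^{1}(\Omega,\Sigma,\mu)$ be a linear isometry, so that $\|T\mathbf{u}\|_{L_{E}^{1}}=\|\mathbf{u}\|_{F}$ for every $\mathbf{u}\in F$ and $T$ respects the vector-space operations.

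Next, I fix an arbitrary triple $\mathbf{x},\mathbf{y},\mathbf{z}\in F$ and consider their images $T\mathbf{x},T\mathbf{y},T\mathbf{z}\in L_{E}^{1}(\Omega,\Sigma,\mu)$. By Lemma \ref{lemL1}, the space $L_{E}^{1}(\Omega,\Sigma,\mu)$ is Hornich-Hlawka, so the inequality \eqref{HHineq} applies to this triple, giving
\[
\|T\mathbf{x}\|_{L_{E}^{1}}+\|T\mathbf{y}\|_{L_{E}^{1}}+\|T\mathbf{z}\|_{L_{E}^{1}}+\|T\mathbf{x}+T\mathbf{y}+T\mathbf{z}\|_{L_{E}^{1}}\geq\|T\mathbf{x}+T\mathbf{y}\|_{L_{E}^{1}}+\|T\mathbf{y}+T\mathbf{z}\|_{L_{E}^{1}}+\|T\mathbf{z}+T\mathbf{x}\|_{L_{E}^{1}}.
\]

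Finally, I invoke the linearity of $T$, rewriting $T\mathbf{x}+T\mathbf{y}=T(\mathbf{x}+\mathbf{y})$ and similarly for the other sums, and then use the isometry property $\|T\mathbf{u}\|_{L_{E}^{1}}=\|\mathbf{u}\|_{F}$ on each of the six terms. This yields the desired inequality \eqref{HHineq} in $F$. Since $\mathbf{x},\mathbf{y},\mathbf{z}$ were arbitrary, $F$ is Hornich-Hlawka, completing the proof. The only thing to emphasize is that the stability of the inequality under passage to subspaces is trivial, so in particular one only needs a linear isometric embedding rather than a surjective one.
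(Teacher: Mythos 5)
Your argument is correct and is exactly the standard transfer-by-isometry reasoning the paper intends (the paper states the corollary without proof precisely because this argument is immediate from Lemma \ref{lemL1}). Nothing is missing: linearity lets you rewrite $T\mathbf{x}+T\mathbf{y}=T(\mathbf{x}+\mathbf{y})$ and the isometry identifies all six norms, so the inequality descends to $F$.
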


The original Hornich-Hlawka inequality can be derived from Corollary
\ref{corL1} by noticing that the Euclidean space $\mathbb{R}^{N}$ embeds
linearly and isometrically into the Lebesgue space $L^{1}(S^{N-1}),$
associated to the normalized Haar measure $m$ on the unit sphere $S^{N-1}$ of
$\mathbb{R}^{N},$ invariant to rotations. The embedding $\Phi:\mathbb{R}%
^{N}\rightarrow L^{1}(S^{N-1})$ is defined via the formula
\[
\Phi(\mathbf{u})=\langle\mathbf{u},\cdot\rangle.
\]
To check this, let $\mathbf{u}_{0}\in S^{N-1}$ arbitrarily fixed and put
\[
C=\left\Vert \Phi(\mathbf{u}_{0})\right\Vert _{L^{1}}=\int_{S^{N-1}}\left\vert
\langle\mathbf{u}_{0},\mathbf{x}\rangle\right\vert \mathrm{d}m.
\]
By rotational invariance, $C=\int_{S^{N-1}}\left\vert \langle\mathbf{u}%
,\mathbf{x}\rangle\right\vert \mathrm{d}m$ for all $\mathbf{u}\in S^{N-1}$,
whence%
\[
\left\Vert \Phi(\mathbf{u})\right\Vert _{L^{1}}=\left\Vert \mathbf{u}%
\right\Vert _{2}\text{\quad for all }\mathbf{u\in}\mathbb{R}^{N}.
\]

This way the validity of the Hornich-Hlawka inequality is obtained by
"transfer" from the case $L^{1}$ space. We will come back to the idea of using
the Haar measures for deriving Hornich-Hlawka type inequalities in Section 6.

As noticed H. S. Witsenhausen \cite{W1973}, the embeddability in an $L^{1}$
space is equivalent to the property%

\[
\sum_{i,j=1}^{n}w_{i}w_{j}\left\Vert \mathbf{x}_{i}-\mathbf{x}_{j}\right\Vert
\leq0,
\]
for $w_{i}$ integers of sum 1, any $n$ and any $n$ points $\mathbf{x}_{k}.$

In the next section we will show that all spaces $L^{p}\left(  [0,1]\right)  $
with $p\in(1,2)$ can be embedded into an $L^{1}$ space.

\section{The Levy transfer theorem}

Recall that any $L^{p}$ space is associated to a measure space $(\Omega
,\Sigma,\mu)$\ and consists of all (equivalence classes) of measurable
functions on $\Omega$ such that%
\[
\left\Vert f\right\Vert _{p}=\left(  \int_{\Omega}\left\vert f\right\vert
^{p}d\mu\right)  ^{1/p}<\infty\text{\quad for }0<p<\infty
\]
and%
\[
\left\Vert f\right\Vert _{\infty}=\inf\left\{  C>0:\left\vert f(\omega
)\right\vert \leq C\text{ for almost every }\omega\right\}  <\infty\text{\quad
for }p=\infty.
\]

A characterization of the finite-dimensional Banach spaces that can be
embedded in an $L^{P}$ space with $p\in(0,\infty),$ was found by P. Levy (see
\cite{Levy}, §\ 63) in connection with his work on the theory of stable processes.

\begin{theorem}
\label{thmLevy}An $N$-dimensional real Banach space $E=(\mathbb{R}%
^{N},\left\Vert \cdot\right\Vert )$ embeds isometrically in $L^{p}(0,1)$
$(p>0)$ if and only if there exists a finite Borel measure $\mu$ on the unit
sphere $S^{N-1}$ in $\mathbb{R}^{N}$ so that, for every $x\in E,$%
\[
\left\Vert x\right\Vert ^{p}=\int_{\Omega}\left\vert \langle x,\xi
\rangle\right\vert ^{p}d\mu(\xi).
\]

\end{theorem}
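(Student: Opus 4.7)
The plan is to prove the two implications separately.

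For sufficiency, I would define the linear map $T:E\to L^{p}(S^{N-1},\mu)$ by $T(x)=\langle x,\cdot\rangle$. The hypothesis is precisely $\|T(x)\|_{p}^{p}=\|x\|^{p}$, so $T$ is an isometric embedding. Since $S^{N-1}$ is a compact metrizable space and $\mu$ is a finite Borel measure, the measure algebra of $(S^{N-1},\mu)$ is separable, and hence $L^{p}(S^{N-1},\mu)$ is isometrically isomorphic to a closed subspace of $L^{p}(0,1)$ endowed with Lebesgue measure, by the classical classification of separable measure algebras (Carath\'eodory--von Neumann). Composing the two isometries yields the desired embedding.

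For necessity, suppose $J:E\to L^{p}(0,1)$ is a linear isometry. Writing $E=\mathbb{R}^{N}$ and letting $e_{1},\dots,e_{N}$ denote the canonical basis, set $f_{i}=J(e_{i})$ and assemble
\[
F:(0,1)\to\mathbb{R}^{N},\qquad F(s)=(f_{1}(s),\dots,f_{N}(s)).
\]
For $x=(x_{1},\dots,x_{N})\in E$, linearity and the isometry property yield
\[
\|x\|^{p}=\int_{0}^{1}\Bigl|\sum_{i=1}^{N}x_{i}f_{i}(s)\Bigr|^{p}ds=\int_{0}^{1}|\langle x,F(s)\rangle|^{p}\,ds,
\]
where $\langle\cdot,\cdot\rangle$ is the standard Euclidean inner product on $\mathbb{R}^{N}$. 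On the set $\{F\neq 0\}$, write $F(s)=r(s)\xi(s)$ with $r(s)=\|F(s)\|_{2}$ and $\xi(s)\in S^{N-1}$; the complementary set contributes nothing to the integral. Define the Borel measure $\mu$ on $S^{N-1}$ as the pushforward $\mu=\xi_{\#}\bigl(r^{p}\,ds\bigr)$. Since $\|F(s)\|_{2}\le\sqrt{N}\max_{i}|f_{i}(s)|$, one has $\int r^{p}\,ds\le N^{p/2}\sum_{i}\|f_{i}\|_{p}^{p}<\infty$, so $\mu$ is finite. The change of variables then produces
\[
\|x\|^{p}=\int_{\{F\neq 0\}}r(s)^{p}|\langle x,\xi(s)\rangle|^{p}\,ds=\int_{S^{N-1}}|\langle x,\eta\rangle|^{p}\,d\mu(\eta),
\]
which is the desired representation.

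The main obstacle, in my view, lies in the sufficiency direction, specifically in the reduction of $L^{p}(S^{N-1},\mu)$ to a subspace of $L^{p}(0,1)$. Although classical, this argument uses the structure theorem for separable measure algebras and must treat the atomic and continuous parts of $\mu$ separately: the atomic part is embedded via disjointly supported step functions on subintervals of $(0,1)$, while the continuous part is handled by the measure-algebra isomorphism with Lebesgue measure. The necessity direction is conceptually transparent once the correct change of variables is set up; the only routine verifications concern the finiteness of $\mu$ and the harmless discarding of the null set on which $F$ vanishes.
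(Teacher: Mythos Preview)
The paper does not prove this theorem: it is stated as a classical result of P.~Levy and attributed to \cite{Levy}, \S63, with the generalization referenced to Koldobsky \cite{Kol1992}. There is therefore no in-paper argument to compare your proposal against.

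Your proof is correct and follows the standard route. The sufficiency direction is handled exactly as one expects: the map $x\mapsto\langle x,\cdot\rangle$ is a linear isometry of $E$ into $L^{p}(S^{N-1},\mu)$, and the latter, being an $L^{p}$ space over a separable finite measure, embeds isometrically into $L^{p}(0,1)$. Your remark that this last step rests on the structure theorem for separable measure algebras (decomposing $\mu$ into atomic and continuous parts) is accurate and is the only place where any real work is hidden. For the necessity direction, writing $F(s)=(f_{1}(s),\dots,f_{N}(s))$, performing the polar decomposition $F=r\xi$ on $\{F\neq 0\}$, and pushing forward $r^{p}\,ds$ by $\xi$ is exactly the right construction; your bound $r^{p}\le N^{p/2}\sum_{i}|f_{i}|^{p}$ cleanly gives finiteness of $\mu$, and discarding $\{F=0\}$ is harmless since the integrand vanishes there.

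One small comment: since the theorem is stated for all $p>0$, for $0<p<1$ the space $L^{p}(0,1)$ is only quasi-normed, but nothing in your argument uses the triangle inequality, so both directions go through verbatim in that range as well.
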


The last formula is called by A. Koldobsky \cite{Kol1999} the
\emph{Blaschke-Levy representation}. See \cite{Kol1992}, Theorem 2, for a
generalization. For $p\in(0,2],$ this representation exists if and only if the
function $\exp(-\left\Vert x\right\Vert ^{p})$ is positive definite and,
hence, it is the characteristic function of a symmetric stable measure in
$\mathbb{R}^{N}.$

The work of P. Levy \cite{Levy}, I. J. Schoenberg \cite{Sch}, M. Kadets
\cite{Kad}, C. Herz \cite{Herz1971} and J. Bretagnolle, D. Dacunha-Castelle
and J. L. Krivine \cite{BDK} have prove the existence of the isometric
embeddings of the real spaces $\ell^{q}(N),~\ell^{q}$ and $L^{q}(0,1)$ in
$L^{p}(0,1)$ for $0<p\leq q\leq2.$ Here $\ell^{q}(N)$ is the the space
$\mathbb{R}^{N}$ endowed with the norm%
\[
\left\Vert \mathbf{x}\right\Vert _{q}=(\left\vert x_{1}\right\vert ^{q}%
+\cdots+\left\vert x_{N}\right\vert ^{q})^{1/q}%
\]
and $\ell^{q}$ denotes its sequence space analogue.

Every separable real Hilbert space is isometric to a subspace of $L^{p}(0,1),$
whenever $p>0$.$~$See J. Lindenstrauss and A. Pe\l czy\'{n}ski \cite{LP}%
,$~$Corollary 1, p. 312.

\begin{remark}
\label{rem_2dim}Every $2$-dimensional real Banach space is isometric to a
subspace of $L^{1}(0,1).$ This was proved independently by C. Herz
\emph{\cite{Herz}}, Th. S. Ferguson \emph{\cite{Ferg}} and J. Lindenstrauss
\emph{\cite{Lind}}. As a consequence, the Hornich-Hlawka inequality
\emph{(HH)} works for all $2$-dimensional real Banach spaces.
\end{remark}

\begin{remark}
L. Dor has shown that $\ell^{p}(3)$ is not isometric to any subspace of
$L^{1}(0,1)$ if $p>2.$ See \emph{\cite{Dor}}, Corollary of Theorem $1.5$, p.
$265$. Combined with the discussion above, it follows that none of the spaces
$L^{p}(0,1)$ and $\ell^{p}$ with $p>2$ can be isometrically embedded into a
space $L^{1}(0,1).$
\end{remark}

The Hornich-Hlawka inequality has a finite dimensional character. It concerns
\emph{only} the 3-dimensional subspaces of a Banach space. As was noticed by
A. Neyman \cite{Ney} (Corollary 3.4) there is no finite list of inequalities
that characterizes embeddability, neither in $L^{1}(0,1)$ nor in any other
$L^{p}$ space unless $p=2$. This is accomplished by proving that for every
$1\leq p<\infty$, $p\neq2$, and every $k\geq2$ there is a $(k+1)$-dimensional
normed space that is not embeddable in $L^{p}$ and all of its $k$-dimensional
subspaces are embeddable in $L^{p}.$ A consequence is the following result:

\begin{theorem}
There exist finite dimensional Hornich-Hlawka spaces that cannot be embedded
isometrically in $L^{1}(0,1).$
\end{theorem}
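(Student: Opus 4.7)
The plan is to combine the finite-dimensional, three-variable nature of the Hornich-Hlawka inequality with Neyman's result (quoted just above the statement) to pass from the existence of a non-embeddable space to the existence of a non-embeddable \emph{Hornich-Hlawka} space.

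First, I would invoke Neyman's theorem (\cite{Ney}, Corollary 3.4) with the choice $p=1$ and $k=3$. This produces a $4$-dimensional real normed space $X=(\mathbb{R}^{4},\left\Vert \cdot\right\Vert )$ such that every $3$-dimensional subspace of $X$ embeds isometrically into $L^{1}(0,1)$, while $X$ itself does not. This is where the heavy lifting is packaged; the theorem stated in the excerpt is essentially a consequence of Neyman's result once one recognizes the intrinsic low-dimensional character of \emph{(HH)}.

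Second, I would observe that \emph{(HHineq)} only involves three vectors $\mathbf{x},\mathbf{y},\mathbf{z}$, so its validity in $X$ is determined \emph{subspace by subspace}: $X$ is a Hornich-Hlawka space if and only if every $3$-dimensional subspace of $X$ is. By construction, each such subspace embeds isometrically in $L^{1}(0,1)$, and by Lemma \ref{lemL1} together with Corollary \ref{corL1} every space admitting an isometric linear embedding into $L^{1}(0,1)$ is Hornich-Hlawka. Hence every $3$-dimensional subspace of $X$ satisfies \emph{(HHineq)}, and therefore so does $X$.

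Combining the two observations, $X$ is a finite dimensional Hornich-Hlawka space which, by the choice dictated by Neyman's theorem, fails to embed isometrically in $L^{1}(0,1)$. The only real obstacle is conceptual rather than technical: one must notice that \emph{(HH)} is a purely three-variable statement, so its truth on a whole space is equivalent to its truth on all $3$-dimensional subspaces; once this is clearly stated, the rest is a direct application of Neyman's construction and Corollary \ref{corL1}. No explicit description of the space $X$ is required for the conclusion.
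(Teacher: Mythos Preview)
Your argument is correct and matches the paper's approach exactly: the paper states the theorem as an immediate consequence of Neyman's result, having just remarked that the Hornich-Hlawka inequality ``concerns \emph{only} the 3-dimensional subspaces of a Banach space,'' and your proposal simply spells out this deduction (Neyman with $p=1$, $k=3$, plus Corollary~\ref{corL1} on each $3$-dimensional subspace). There is nothing to add.
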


It is straightforward that none of the spaces $\ell^{p}(3)$ $(\mathbb{R}^{3}$
with the norm of index $p)$ for $p\in(\left(  \log3\right)  /\left(
\log3-\log2\right)  ,\infty]$ is Hornich-Hlawka. For, check the inequality for
$\mathbf{x}=(-1,1,1)$, $\mathbf{y}=(1,-1,1)$ and $\mathbf{z}=(1,1,-1).$ Notice
that $\left(  \log3\right)  /\left(  \log3-\log2\right)  =\allowbreak
2.\,\allowbreak709\,511\,291...$ .

As a consequence, all spaces $L^{p}(0,1)$ with $p$ in the same range of values
fail to be Hornich-Hlawka. Even more, it seems very likely that the answer to
the following problem is positive.

\begin{problem}
\label{probl1}Prove that actually none of the spaces $\ell^{p}(3)$ with
$p\in(2,\infty]$ is Hornich-Hlawka.
\end{problem}

A positive answer to Problem \ref{probl1} would implies that none of the
spaces $L^{p}(0,1)$ with $p\in(2,\infty]$ is Hornich-Hlawka.

How apart can be the class of Hornich-Hlawka spaces from the rest of Banach spaces?

\begin{problem}
\label{probNonHH}Are there Banach spaces whose all finite dimensional
subspaces of dimension greater than or equal to $3$ are not Hornich-Hlawka?
\end{problem}

A positive solution to Problem \ref{probNonHH}, if it exists, will probably be
an amalgamation of the spaces $\ell^{3}(3),\ell^{4}(3),\ell^{5}(3),\ldots$ but
we lack a clear argument.

A natural generalization of $L^{p}$ spaces is provided by the class of
$L^{p(\cdot)}$ spaces \emph{with variable exponent}. They differ from the
classical $L^{p}$ spaces in that the exponent $p$ is not constant but a
function from the measure space under attention to $[1,\infty]$. See the
monograph of L. Diening, P. Harjulehto, P. Hästö and M. Ruzicka
\cite{DHHR2011} for a thorough presentation.

\begin{problem}
\label{probl2}Is every $L^{p(\cdot)}(0,1)$ space with a variable exponent
$p(\cdot):(0,1)\rightarrow\lbrack1,2]$ a Hornich-Hlawka space?
\end{problem}

Let $p\in\lbrack1,\infty)$ and $k\geq1.$ The Sobolev space $W^{k,p}(0,1)$
consists of all functions $u\in L^{p}\left(  0,1\right)  $ which admit weak
derivatives $\partial_{\alpha}u\in L^{p}\left(  0,1\right)  $ for all multi
indices $\alpha$ with $\left\vert \alpha\right\vert \,\leq k.$ This is a
Banach space with respect to the norm%
\[
\left\Vert u\right\Vert _{W^{k,p}}=\left(  \sum\nolimits_{\left\vert
\alpha\right\vert \,\leq k}\int_{0}^{1}\left\vert \partial_{\alpha}\right\vert
^{p}\mathrm{d}x\right)  ^{1/p}.
\]
A nice overview of the theory of Sobolev spaces is offered by the book of M.
Willem \cite{Wil}.

\begin{theorem}
Let $k$ be a positive integer and $p\in\lbrack1,2].$ Then the spaces
$W^{k,p}(0,1)$ and $W_{0}^{k,p}(0,1)$ $($the closure in $W^{k,p}(0,1)$ of the
space of infinitely differentiable functions compactly supported in $\Omega),$
both are Hornich-Hlawka.
\end{theorem}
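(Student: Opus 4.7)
The plan is to realize both Sobolev spaces as isometric subspaces of an $L^{1}$ space and then invoke Corollary \ref{corL1}.

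First, I would consider the linear map
\[
\Phi:W^{k,p}(0,1)\longrightarrow \bigoplus_{|\alpha|\leq k} L^{p}(0,1),\qquad u\longmapsto (\partial_{\alpha}u)_{|\alpha|\leq k},
\]
where the codomain is endowed with the $\ell^{p}$-sum norm. By the very definition of $\|\cdot\|_{W^{k,p}}$, the map $\Phi$ is a linear isometry. Since the $\ell^{p}$-sum of finitely many copies of $L^{p}(0,1)$ is, up to a measure-preserving identification, itself an $L^{p}(0,1)$ space (view the summands as living on disjoint copies of $(0,1)$ and rescale), this produces a linear isometric embedding $W^{k,p}(0,1)\hookrightarrow L^{p}(0,1)$.

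Second, for $p\in(1,2]$ I would invoke the isometric embedding $L^{p}(0,1)\hookrightarrow L^{1}(0,1)$ supplied by the theorems of Bretagnolle, Dacunha-Castelle and Krivine already recalled in the previous section; for $p=1$ the first step already lands in an $L^{1}$ space. Composing, $W^{k,p}(0,1)$ embeds linearly and isometrically into a suitable $L^{1}$ space, and Corollary \ref{corL1} then delivers the Hornich-Hlawka inequality for $W^{k,p}(0,1)$.

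The case of $W_{0}^{k,p}(0,1)$ follows at once, because the Hornich-Hlawka inequality depends only on the values of the norm on triples of vectors and is therefore inherited by every linear subspace; alternatively, $\Phi$ restricts to a linear isometric embedding of $W_{0}^{k,p}(0,1)$ into the same $L^{1}$ space. The only mildly delicate step is the identification of the finite $\ell^{p}$-direct sum with a single $L^{p}(0,1)$, but this is a routine change-of-variable argument and presents no real obstacle: all of the substantive analytic content has already been done by the isometric embedding theorems assembled earlier in the paper, together with Corollary \ref{corL1}.
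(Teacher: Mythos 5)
Your proposal is correct and follows essentially the same route as the paper: both realize $W^{k,p}(0,1)$ isometrically inside an $L^{p}$-type space via the derivative map $u\mapsto(\partial_{\alpha}u)_{|\alpha|\leq k}$, then use the isometric embedding of $L^{p}$ into $L^{1}$ for $p\in[1,2]$ together with the $L^{1}$ transfer principle (Lemma \ref{lemL1}/Corollary \ref{corL1}), and pass to the closed subspace $W_{0}^{k,p}(0,1)$. Your identification of the finite $\ell^{p}$-direct sum with a scalar $L^{p}$ space over a disjoint union is just a more explicit rendering of the paper's use of the Bochner space $L^{p}((0,1);\mathbb{R}^{m})$, so there is no substantive difference.
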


\begin{proof}
Since $L^{p}(0,1)$ embeds linearly and isometrically into $L^{1}(0,1),$ it
follows that a similar result works for the pair of spaces $L_{\mathbb{R}^{m}%
}^{p}(0,1)$ and $L_{\mathbb{R}^{m}}^{1}(0,1).$ According to Lemma \ref{lemL1},
this shows that $L_{\mathbb{R}^{m}}^{p}(0,1)$ is Hornich-Hlawka. The map%
\[
\Phi:W^{k,p}(0,1)\rightarrow L^{p}((0,1);\mathbb{R}^{m}),\text{ }%
\Phi(u)=\left(  \partial^{\alpha}u\right)  _{\left\vert \alpha\right\vert \leq
k}%
\]
is a linear isometry since $\left\Vert \Phi(u)\right\Vert _{p}=\left\Vert
u\right\Vert _{k,p}.$ Therefore $W^{k,p}(0,1)$ and its closed subspace
$W_{0}^{k,p}(0,1)$ both are Hornich-Hlawka.
\end{proof}

The aforementioned monograph of Diening, Harjulehto, Hästö and Ruzicka
\cite{DHHR2011} also includes a comprehensive presentation of the theory of
Sobolev spaces $W^{k,p(\cdot)}(0,1)$ of a variable exponent. Related to
Problem \ref{probl2} is the following question.

\begin{problem}
\label{probl3}Which Sobolev spaces $W^{k,p(\cdot)}(0,1)$ with a variable
exponent $p(\cdot):(0,1)\rightarrow\lbrack1,2]$ verify the Hornich-Hlawka inequality?
\end{problem}

\section{The Hornich-Hlawka inequality through finite representability}

The embeddability condition in Theorem \ref{thmLevy} can be relaxed using the
concepts of finite representability and of isomorphic embedding.

A Banach space $X$ is said to be \emph{finitely representable} in a Banach
space $Y$ if for every $\varepsilon>0$ and every subspace $M$ of $X$ there
exists a finite dimensional subspace $N$ of $Y$ and a linear isomorphism
$T:M\rightarrow N$ such that%
\[
(1-\varepsilon)\left\Vert x\right\Vert \leq\left\Vert T(x)\right\Vert
\leq(1+\varepsilon)\left\Vert x\right\Vert \text{\quad for all }x\in M.
\]

According to a famous result due to Dvoretzky, $\ell^{2}$ is finitely
representable in any Banach space. See Figiel \cite{Fi} for a short proof.

On the other hand, according to the principle of local reflexivity (see
\cite{LR1969}), the second dual of any Banach space $X$ is finitely
representable in $X$.

The following transfer theorem originates from Proposition 7.6 in \cite{LP}:

\begin{theorem}
\label{thmLPgen} Suppose that $A=(a_{ik})_{i,k}$ is a real $m\times r$ matrix,
$B=(b_{jk})_{j,k}$ is a real $n\times r$ matrix and $(\lambda_{i})_{i=1}^{m}$
and $(\mu_{j})_{j=1}^{n}$ are two families of positive numbers. If for some
$p>0,$%
\[
\sum\nolimits_{i=1}^{m}\lambda_{i}\left\vert \sum\nolimits_{k=1}^{r}%
a_{ik}t_{k}\right\vert ^{p}\leq\sum\nolimits_{j=1}^{n}\mu_{j}\left\vert
\sum\nolimits_{k=1}^{r}b_{jk}t_{k}\right\vert ^{p}%
\]
for all $t_{1},...,t_{r}\in\mathbb{R},$ then%
\[
\sum\nolimits_{i=1}^{m}\lambda_{i}\left\Vert \sum\nolimits_{k=1}^{r}%
a_{ik}\mathbf{x}_{k}\right\Vert _{X}^{p}\leq\sum\nolimits_{j=1}^{n}\mu
_{j}\left\Vert \sum\nolimits_{k=1}^{r}b_{jk}\mathbf{x}_{k}\right\Vert _{X}^{p}%
\]
for every family $\mathbf{x}_{1},...,\mathbf{x}_{r}$ of vectors in a Banach
space $X=(X,\left\Vert \cdot\right\Vert _{X})$ which is finitely representable
in $L^{p}(0,1).$
\end{theorem}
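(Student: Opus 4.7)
The plan is to prove the theorem in two stages: first establish the inequality for $X=L^{p}(0,1)$ directly by integrating the scalar hypothesis pointwise, and then transfer it to any Banach space finitely representable in $L^{p}(0,1)$ using the defining $(1\pm\varepsilon)$-isomorphism and a limit argument.

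For the first stage, take $\mathbf{x}_{1},\dots,\mathbf{x}_{r}\in L^{p}(0,1)$. For almost every $\omega\in(0,1)$, apply the scalar hypothesis with $t_{k}=\mathbf{x}_{k}(\omega)$ to get
\[
\sum\nolimits_{i=1}^{m}\lambda_{i}\Bigl|\sum\nolimits_{k=1}^{r}a_{ik}\mathbf{x}_{k}(\omega)\Bigr|^{p}\leq\sum\nolimits_{j=1}^{n}\mu_{j}\Bigl|\sum\nolimits_{k=1}^{r}b_{jk}\mathbf{x}_{k}(\omega)\Bigr|^{p}.
\]
Since both sides are nonnegative measurable functions of $\omega$ and the sums are finite, integrating over $(0,1)$ and interchanging sum with integral yields
\[
\sum\nolimits_{i=1}^{m}\lambda_{i}\Bigl\Vert\sum\nolimits_{k=1}^{r}a_{ik}\mathbf{x}_{k}\Bigr\Vert_{p}^{p}\leq\sum\nolimits_{j=1}^{n}\mu_{j}\Bigl\Vert\sum\nolimits_{k=1}^{r}b_{jk}\mathbf{x}_{k}\Bigr\Vert_{p}^{p},
\]
which is the desired inequality in $L^{p}(0,1)$. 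Note that this immediately gives the inequality in every finite-dimensional subspace of $L^{p}(0,1)$.

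For the second stage, let $X$ be finitely representable in $L^{p}(0,1)$ and fix $\mathbf{x}_{1},\dots,\mathbf{x}_{r}\in X$. Set $M=\mathrm{span}\{\mathbf{x}_{1},\dots,\mathbf{x}_{r}\}$, a finite-dimensional subspace of $X$. For a given $\varepsilon\in(0,1)$, choose a finite-dimensional subspace $N\subset L^{p}(0,1)$ and a linear isomorphism $T\colon M\rightarrow N$ with $(1-\varepsilon)\Vert x\Vert_{X}\leq\Vert T(x)\Vert_{p}\leq(1+\varepsilon)\Vert x\Vert_{X}$ for all $x\in M$. Applying the first-stage result to $T(\mathbf{x}_{1}),\dots,T(\mathbf{x}_{r})\in L^{p}(0,1)$ and using the linearity of $T$, we obtain
\[
\sum\nolimits_{i=1}^{m}\lambda_{i}\Bigl\Vert T\Bigl(\sum\nolimits_{k=1}^{r}a_{ik}\mathbf{x}_{k}\Bigr)\Bigr\Vert_{p}^{p}\leq\sum\nolimits_{j=1}^{n}\mu_{j}\Bigl\Vert T\Bigl(\sum\nolimits_{k=1}^{r}b_{jk}\mathbf{x}_{k}\Bigr)\Bigr\Vert_{p}^{p}.
\]
Bounding the left side from below by $(1-\varepsilon)^{p}\sum_{i}\lambda_{i}\Vert\sum_{k}a_{ik}\mathbf{x}_{k}\Vert_{X}^{p}$ and the right side from above by $(1+\varepsilon)^{p}\sum_{j}\mu_{j}\Vert\sum_{k}b_{jk}\mathbf{x}_{k}\Vert_{X}^{p}$, then letting $\varepsilon\rightarrow 0^{+}$, yields the desired inequality in $X$.

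There is no real obstacle: the entire argument is conceptually straightforward once one recognizes that the scalar inequality can be applied pointwise. The only mild care required is at the transfer step, where one must ensure that $M$ itself is finite-dimensional (it is, being the span of finitely many vectors) so that the finite-representability hypothesis actually applies, and that the $(1\pm\varepsilon)$ distortions are in the correct direction on each side of the inequality before letting $\varepsilon\to 0$.
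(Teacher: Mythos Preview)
Your proof is correct. The two-stage structure---prove it in $L^{p}(0,1)$, then push it through a $(1\pm\varepsilon)$-isomorphism and let $\varepsilon\to 0$---matches the paper's, and your second stage is essentially identical to theirs.

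Where you diverge is in the first stage. The paper does not argue by pointwise evaluation in $L^{p}(0,1)$; instead it invokes Levy's theorem (the Blaschke--Levy representation, Theorem~\ref{thmLevy}) to write the $L^{p}$-norm on the finite-dimensional image $T(M)$ as $\left\Vert y\right\Vert_{p}^{p}=\int_{S^{N-1}}\left\vert\langle y,\xi\rangle\right\vert^{p}d\mu(\xi)$, applies the scalar hypothesis with $t_{k}=\langle T(\mathbf{x}_{k}),\mathbf{u}\rangle$, and integrates over the sphere. Your route is more elementary: since the $L^{p}$-norm is already an integral of $\left\vert f(\omega)\right\vert^{p}$, you just apply the scalar inequality at each $\omega$ and integrate, bypassing Levy's representation entirely. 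This is a genuine simplification and makes the result self-contained without the machinery of Section~3. The paper's approach, on the other hand, keeps the argument parallel to Levi's reduction theorem (Theorem~\ref{thm1}) and emphasises the role of integral representations of norms, which is thematically consistent with the rest of the survey.
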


\begin{proof}
Indeed, we may reduce ourselves to the case where $X$ is finite dimensional
and for every $\varepsilon>0$ there exists a linear map $T:X\rightarrow
L^{p}(0,1)$ such that
\[
(1-\varepsilon)\left\Vert x\right\Vert ^{p}\leq\left\Vert T(x)\right\Vert
^{p}\leq(1+\varepsilon)\left\Vert x\right\Vert ^{p}%
\]
\ for all $x\in X.$ According to our hypothesis,
\[
\sum\nolimits_{i=1}^{m}\lambda_{i}\left\vert \langle\sum\nolimits_{k=1}%
^{r}a_{ik}\mathbf{x}_{k},\mathbf{u}\rangle\right\vert ^{p}\leq\sum
\nolimits_{j=1}^{n}\mu_{j}\left\vert \langle\sum\nolimits_{k=1}^{r}%
b_{jk}\mathbf{x}_{k},\mathbf{u}\rangle\right\vert ^{p},
\]
for every $\mathbf{u}\in S^{N-1},$ and an appeal to Theorem \ref{thmLevy}
easily yields the existence of a finite Borel measure $\mu$ on the unit sphere
$S^{N-1}$ in $\mathbb{R}^{N}$ such that \
\begin{multline*}
(1-\varepsilon)\sum\nolimits_{i=1}^{m}\lambda_{i}\left\Vert \sum
\nolimits_{k=1}^{r}a_{ik}\mathbf{x}_{k}\right\Vert _{X}^{p}\leq\sum
\nolimits_{i=1}^{m}\lambda_{i}\left\Vert \sum\nolimits_{k=1}^{r}%
a_{ik}\mathbf{x}_{k}\right\Vert _{p}^{p}\\
=\sum\nolimits_{i=1}^{m}\lambda_{i}\int_{S^{N-1}}\left\vert \langle
\sum\nolimits_{j=1}^{r}a_{ij}\mathbf{x}_{j},\mathbf{u}\rangle\right\vert
^{p}d\mu(\mathbf{u})\\
\leq\sum\nolimits_{j=1}^{n}\mu_{j}\int_{S^{N-1}}\left\vert \langle
\sum\nolimits_{k=1}^{r}b_{jk}\mathbf{x}_{k},\mathbf{u}\rangle\right\vert
^{p}d\mu(\mathbf{u})\\
=\sum\nolimits_{j=1}^{n}\mu_{j}\left\Vert \sum\nolimits_{k=1}^{r}%
b_{jk}\mathbf{x}_{k}\right\Vert _{p}^{p}\leq(1+\varepsilon)\sum\nolimits_{j=1}%
^{n}\mu_{j}\left\Vert \sum\nolimits_{k=1}^{r}b_{jk}\mathbf{x}_{k}\right\Vert
_{X}^{p}.
\end{multline*}
The proof ends by noticing that $\varepsilon>0$ was arbitrarily fixed.
\end{proof}

Theorem \ref{thmLPgen} implies the following new generalization of Hlawka's
inequality:%
\[
\left\Vert \mathbf{x}\right\Vert _{X}^{p}+\left\Vert \mathbf{y}\right\Vert
_{X}^{p}+\left\Vert \mathbf{z}\right\Vert _{X}^{p}+\left\Vert \mathbf{x}%
+\mathbf{y}+\mathbf{z}\right\Vert _{X}^{p}\geq\left\Vert \mathbf{x}%
+\mathbf{y}\right\Vert _{X}^{p}+\left\Vert \mathbf{y}+\mathbf{z}\right\Vert
_{X}^{p}+\left\Vert \mathbf{z}+\mathbf{x}\right\Vert _{X}^{p}%
\]
for all vectors in a Banach space $X=(X,\left\Vert \cdot\right\Vert _{X})$
which is finitely representable in a space $L^{p}(0,1)$ with $0<p\leq2.$

\section{The Hornich-Hlawka inequality in uniformly non-square spaces}

In connection with the famous work \cite{JN} of Jordan and von Neumann
concerning the inner product spaces, J. A. Clarkson \cite{Cl1937} has
introduced the von Neumann-Jordan constant $C_{NJ}(X)$ of a Banach space $X$
as
\[
C_{NJ}(X)=\sup\left\{  \frac{\left\Vert \mathbf{x}+\mathbf{y}\right\Vert
^{2}+\left\Vert \mathbf{x}-\mathbf{y}\right\Vert ^{2}}{2\left\Vert
\mathbf{x}\right\Vert ^{2}+2\left\Vert \mathbf{y}\right\Vert ^{2}}%
:\mathbf{x},\mathbf{y}\in X\text{ and }\left\Vert \mathbf{x}\right\Vert
+\left\Vert \mathbf{y}\right\Vert \neq0\right\}  .
\]

We have $1\leq C_{NJ}(X)\leq2$ for all Banach spaces $X$ and $C_{NJ}(X)=1$
\emph{if and only if} $X$ \emph{is a Hilbert space}. Notice also that
$C_{NJ}(X)<2$ for any uniformly convex space and $C_{NJ}(X)=2$ in the case of
spaces $L^{p}(\mathbb{R})$ with $p=1$\ or $p=\infty$ and the same is true in
the case of Banach spaces of continuous functions endowed with the sup norm.

For details see \cite{KMT} and \cite{KT}.

\begin{theorem}
In any Banach space,%
\[
C_{NJ}(X)\left(  \left\Vert x+y+z\right\Vert ^{2}+\left\Vert x\right\Vert
^{2}+\left\Vert y\right\Vert ^{2}+\left\Vert z\right\Vert ^{2}\right)
\geq\left\Vert x+y\right\Vert ^{2}+\left\Vert y+z\right\Vert ^{2}+\left\Vert
z+x\right\Vert ^{2}.
\]

Notice that in the case of Hilbert spaces this inequality reduces to the
Hornich-Hlawka inequality.
\end{theorem}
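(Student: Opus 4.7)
The plan is to apply the NJ inequality cyclically in its polarized form. The defining inequality
\[
\|u+v\|^{2}+\|u-v\|^{2}\leq 2C_{NJ}(X)(\|u\|^{2}+\|v\|^{2})
\]
is equivalent, via the substitution $u=(a+b)/2,\ v=(a-b)/2$, to the dual form
\[
\|a\|^{2}+\|b\|^{2}\leq \frac{C_{NJ}(X)}{2}\bigl(\|a+b\|^{2}+\|a-b\|^{2}\bigr),
\]
which is the form I will use.

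First, I apply the dual form with the three cyclic pairs $(a,b)=(x,y+z),(y,z+x),(z,x+y)$ and add. With the shorthand
\[
P=\|x\|^{2}+\|y\|^{2}+\|z\|^{2},\quad Q=\|x+y\|^{2}+\|y+z\|^{2}+\|z+x\|^{2},\quad R=\|x+y+z\|^{2},
\]
\[
D=\|{-x+y+z}\|^{2}+\|x-y+z\|^{2}+\|x+y-z\|^{2},
\]
this gives $P+Q\leq \frac{1}{2}C_{NJ}(X)(3R+D)$. In any Hilbert space, expansion through the inner product yields the identity $R+D=4P$; substituting collapses the above into Fr\'echet's identity $Q=P+R$, which recovers the theorem for $C_{NJ}(X)=1$.

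To upgrade from the Hilbert case, I plan to establish in a second step the auxiliary bound
\[
R+D\leq \frac{2(C_{NJ}(X)+1)}{C_{NJ}(X)}\,P,
\]
which, combined with the display above, yields exactly the target $Q\leq C_{NJ}(X)(P+R)$. The most natural attempt is another cyclic use of NJ, this time with the pairs $(y-z,x),(z-x,y),(x-y,z)$: each such application splits into two of the three vectors $\{-x+y+z,\,x-y+z,\,x+y-z\}$ whose norms make up $D$, with residual terms involving $R$ and the squared differences $\|x-y\|^{2}+\|y-z\|^{2}+\|z-x\|^{2}$, the latter being controllable by $P$ and $Q$ through a further application of NJ.

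The main obstacle is this final step. Nested applications of the NJ inequality naturally accumulate factors of $C_{NJ}(X)^{2}$ rather than the single $C_{NJ}(X)$ demanded by the statement, so recovering the sharp linear constant will require tracking the balance carefully---most likely by formulating both the original and the dual NJ inequalities as linear relations among $P,\,Q,\,R,\,D$ and resolving the resulting small system, rather than simply chaining the estimates.
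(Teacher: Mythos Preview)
Your instinct that something is off with the single factor $C_{NJ}(X)$ is correct, and the auxiliary bound you propose,
\[
R+D\leq \frac{2(C_{NJ}(X)+1)}{C_{NJ}(X)}\,P,
\]
is false. Take $X=\ell^{\infty}(3)$ (so $C_{NJ}(X)=2$) with $x=(-1,1,1)$, $y=(1,-1,1)$, $z=(1,1,-1)$: then $P=3$, $R=1$, $D=27$, giving $R+D=28$ against a right-hand side of $9$. Worse, this same triple falsifies the theorem as stated: here $Q=\|x+y\|^{2}+\|y+z\|^{2}+\|z+x\|^{2}=12$, while $C_{NJ}(X)(P+R)=2\cdot 4=8$. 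So no linear combination of NJ estimates can recover the single factor; the $C_{NJ}(X)^{2}$ you flagged is what the method genuinely delivers, and the counterexample shows one cannot do better.

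For comparison, the paper's argument is not cyclic. It applies the NJ upper bound to the two pairs $(x+y+z,x)$ and $(y,z)$, adds, and then invokes
\[
\tfrac{1}{2}\bigl\|(x+y)+(x+z)\bigr\|^{2}+\tfrac{1}{2}\bigl\|(x+y)-(x+z)\bigr\|^{2}\geq \|x+y\|^{2}+\|x+z\|^{2}.
\]
But this last step is the \emph{lower} parallelogram inequality $\|u+v\|^{2}+\|u-v\|^{2}\geq 2(\|u\|^{2}+\|v\|^{2})$, which fails in general Banach spaces (indeed, if it held for all $u,v$, the substitution $(u,v)\mapsto(u+v,u-v)$ would force the reverse inequality and hence the parallelogram law). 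What the NJ constant actually gives in that direction is $\|u+v\|^{2}+\|u-v\|^{2}\geq \tfrac{2}{C_{NJ}(X)}(\|u\|^{2}+\|v\|^{2})$; inserting this corrected bound into the paper's chain produces exactly the extra factor of $C_{NJ}(X)$, i.e., the inequality with $C_{NJ}(X)^{2}$ on the left---the same wall you ran into.
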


\begin{proof}
Indeed,%
\[
C_{NJ}(X)\left(  \left\Vert \mathbf{x}+y+z\right\Vert ^{2}+\left\Vert
x\right\Vert ^{2}\right)  \geq\frac{1}{2}\left\Vert 2x+y+z\right\Vert
^{2}+\frac{1}{2}\left\Vert y+z\right\Vert ^{2}%
\]
and%
\[
C_{NJ}(X)\left(  \left\Vert y\right\Vert ^{2}+\left\Vert z\right\Vert
^{2}\right)  \geq\frac{1}{2}\left\Vert y+z\right\Vert ^{2}+\frac{1}%
{2}\left\Vert y-z\right\Vert ^{2}.
\]

On the other hand,%
\begin{align*}
\frac{1}{2}\left\Vert 2x+y+z\right\Vert ^{2}+\frac{1}{2}\left\Vert
y-z\right\Vert ^{2}  &  =\frac{1}{2}\left\Vert \left(  x+y\right)  +\left(
x+z\right)  \right\Vert ^{2}+\frac{1}{2}\left\Vert \left(  x+y\right)
-(x+z)\right\Vert ^{2}\\
&  \geq\left\Vert x+y\right\Vert ^{2}+\left\Vert x+z\right\Vert ^{2}.
\end{align*}
Therefore%
\[
C_{NJ}(X)\left(  \left\Vert x+y+z\right\Vert ^{2}+\left\Vert x\right\Vert
^{2}+\left\Vert y\right\Vert ^{2}+\left\Vert z\right\Vert ^{2}\right)
\geq\left\Vert x+y\right\Vert ^{2}+\left\Vert y+z\right\Vert ^{2}+\left\Vert
z+x\right\Vert ^{2}.
\]

\end{proof}

A Banach space $X$ is called \emph{uniformly non-square} $($James
\emph{\cite{James1964}}$)$ if there exists a $\delta$ $\in(0,1)$ such that for
any $x$, $y$ in the unit sphere of $X$ either $\left\Vert \mathbf{x}%
+\mathbf{y}\right\Vert /2\leq1-\delta$ or $\left\Vert \mathbf{x}%
-\mathbf{y}\right\Vert /2\leq1-\delta$.

The constant%
\[
J(X)=\sup\{\min(\left\Vert \mathbf{x}+\mathbf{y}\right\Vert ,\left\Vert
\mathbf{x}-\mathbf{y}\right\Vert ):\left\Vert \mathbf{x}\right\Vert
=\left\Vert \mathbf{y}\right\Vert =1\}
\]
is called the \emph{non-square} or \emph{James constant} of $X$.

If $\dim X\geq2,$ then $\sqrt{2}\leq J(X)\leq2$ and $J(X)=\sqrt{2}$ if $X$ is
a Hilbert space. A Banach space $X$ is uniformly non-square if and only if
$J(X)<2.$ According to M. Kato, L. Maligranda, and Y. Takahashi
\emph{\cite{KMT}}, Theorem $3$,
\[
\frac{1}{2}J(X)^{2}\leq C_{NJ}(X)\leq\frac{J(X)^{2}}{\left(  J(X)-1\right)
^{2}+1},
\]
so the condition $C_{NJ}(X)<2$ is equivalent to the fact that $X$ is uniformly
non-square; see \emph{\cite{KMT}}, Proposition 1. As was noticed by James
\emph{\cite{James1964}}, any uniformly non-square space is reflexive.

The following Hornich-Hlawka type inequality that works in any Banach space
can be found in the paper by Y. Takahashi, S. E. Takahashi, and S. Wada
\cite{TTW2002}:

\begin{theorem}
Let $E$ be a Banach space and $r,s\geq1$ be two real numbers. Then
\begin{multline*}
\left(  \left\Vert \mathbf{x}+\mathbf{y}\right\Vert ^{s}+\left\Vert
\mathbf{y}+\mathbf{z}\right\Vert ^{s}+\left\Vert \mathbf{z}+\mathbf{x}%
\right\Vert ^{s}\right)  ^{1/s}\\
\leq2^{1-\left(  2/r\right)  }3^{1/s}\left(  \left\Vert \mathbf{x}\right\Vert
^{r}+\left\Vert \mathbf{y}\right\Vert ^{r}+\left\Vert \mathbf{z}\right\Vert
^{r}+\left\Vert \mathbf{x}+\mathbf{y}+\mathbf{z}\right\Vert ^{r}\right)
^{1/r}%
\end{multline*}
for all $\mathbf{x,y},\mathbf{z}\in E.$
\end{theorem}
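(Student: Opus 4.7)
The plan is to bound each of the three pair-sum norms $\left\Vert\mathbf{x}+\mathbf{y}\right\Vert$, $\left\Vert\mathbf{y}+\mathbf{z}\right\Vert$, $\left\Vert\mathbf{z}+\mathbf{x}\right\Vert$ individually by the same symmetric quantity depending on $\left\Vert\mathbf{x}\right\Vert$, $\left\Vert\mathbf{y}\right\Vert$, $\left\Vert\mathbf{z}\right\Vert$, $\left\Vert\mathbf{x}+\mathbf{y}+\mathbf{z}\right\Vert$, and then to pass to the $s$-norm on the left by the crudest possible estimate. Nothing beyond the triangle inequality and the power-mean inequality is required; the only mild twist is to use the triangle inequality in two complementary ways and then average.

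\emph{Step 1 (symmetric triangle estimate).} For $\left\Vert\mathbf{x}+\mathbf{y}\right\Vert$, apply the triangle inequality in the two forms
\[
\left\Vert\mathbf{x}+\mathbf{y}\right\Vert\le\left\Vert\mathbf{x}\right\Vert+\left\Vert\mathbf{y}\right\Vert,\qquad
\left\Vert\mathbf{x}+\mathbf{y}\right\Vert=\left\Vert(\mathbf{x}+\mathbf{y}+\mathbf{z})-\mathbf{z}\right\Vert\le\left\Vert\mathbf{x}+\mathbf{y}+\mathbf{z}\right\Vert+\left\Vert\mathbf{z}\right\Vert,
\]
and average to get
\[
\left\Vert\mathbf{x}+\mathbf{y}\right\Vert\le\tfrac12\bigl(\left\Vert\mathbf{x}\right\Vert+\left\Vert\mathbf{y}\right\Vert+\left\Vert\mathbf{z}\right\Vert+\left\Vert\mathbf{x}+\mathbf{y}+\mathbf{z}\right\Vert\bigr).
\]
The identical estimate holds for $\left\Vert\mathbf{y}+\mathbf{z}\right\Vert$ (using $\mathbf{y}+\mathbf{z}=(\mathbf{x}+\mathbf{y}+\mathbf{z})-\mathbf{x}$) and for $\left\Vert\mathbf{z}+\mathbf{x}\right\Vert$.

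\emph{Step 2 (power mean on the right-hand side).} Write $S=\left\Vert\mathbf{x}\right\Vert+\left\Vert\mathbf{y}\right\Vert+\left\Vert\mathbf{z}\right\Vert+\left\Vert\mathbf{x}+\mathbf{y}+\mathbf{z}\right\Vert$ and $A_r=\left\Vert\mathbf{x}\right\Vert^{r}+\left\Vert\mathbf{y}\right\Vert^{r}+\left\Vert\mathbf{z}\right\Vert^{r}+\left\Vert\mathbf{x}+\mathbf{y}+\mathbf{z}\right\Vert^{r}$. Since $r\ge 1$, the power-mean inequality gives $S/4\le (A_r/4)^{1/r}$, that is $S\le 4^{\,1-1/r}A_r^{1/r}$. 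Inserting this in Step 1,
\[
\max\bigl\{\left\Vert\mathbf{x}+\mathbf{y}\right\Vert,\left\Vert\mathbf{y}+\mathbf{z}\right\Vert,\left\Vert\mathbf{z}+\mathbf{x}\right\Vert\bigr\}\le\tfrac12\cdot 4^{\,1-1/r}A_r^{1/r}=2^{\,1-2/r}A_r^{1/r},
\]
where the last equality uses $4^{1-1/r}/2=2/2^{2/r}=2^{1-2/r}$.

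\emph{Step 3 (aggregation on the left-hand side).} For any nonnegative reals $a,b,c$ and any $s\ge 1$,
\[
(a^{s}+b^{s}+c^{s})^{1/s}\le 3^{1/s}\max(a,b,c).
\]
Applying this to $a=\left\Vert\mathbf{x}+\mathbf{y}\right\Vert$, $b=\left\Vert\mathbf{y}+\mathbf{z}\right\Vert$, $c=\left\Vert\mathbf{z}+\mathbf{x}\right\Vert$ and feeding in Step 2 yields exactly
\[
\bigl(\left\Vert\mathbf{x}+\mathbf{y}\right\Vert^{s}+\left\Vert\mathbf{y}+\mathbf{z}\right\Vert^{s}+\left\Vert\mathbf{z}+\mathbf{x}\right\Vert^{s}\bigr)^{1/s}\le 2^{\,1-2/r}3^{1/s}A_r^{1/r},
\]
which is the claimed inequality.

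There is essentially no obstacle: the only conceptual point is Step 1, where the identity $\mathbf{x}+\mathbf{y}=(\mathbf{x}+\mathbf{y}+\mathbf{z})-\mathbf{z}$ is what forces $\left\Vert\mathbf{x}+\mathbf{y}+\mathbf{z}\right\Vert$ to appear on the right and allows the four quantities to be used symmetrically, producing the sharp factor $2^{\,1-2/r}$ (rather than the weaker $2^{\,1-1/r}$ one would get from $\left\Vert\mathbf{x}+\mathbf{y}\right\Vert\le\left\Vert\mathbf{x}\right\Vert+\left\Vert\mathbf{y}\right\Vert$ alone). Everything else is a routine application of the power-mean inequality, so the argument works in any Banach space with no extra structure.
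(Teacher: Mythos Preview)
Your proof is correct. The paper does not actually supply a proof of this theorem; it merely quotes the result from Takahashi, Takahashi and Wada \cite{TTW2002}. Your argument --- averaging the two triangle-inequality bounds $\left\Vert\mathbf{x}+\mathbf{y}\right\Vert\le\left\Vert\mathbf{x}\right\Vert+\left\Vert\mathbf{y}\right\Vert$ and $\left\Vert\mathbf{x}+\mathbf{y}\right\Vert\le\left\Vert\mathbf{x}+\mathbf{y}+\mathbf{z}\right\Vert+\left\Vert\mathbf{z}\right\Vert$, then applying the power-mean inequality on the four-term sum and the trivial bound $(a^{s}+b^{s}+c^{s})^{1/s}\le 3^{1/s}\max(a,b,c)$ --- is clean and self-contained, and each step is valid under the stated hypotheses $r,s\ge 1$ (in fact the condition $s\ge 1$ is not even used; Step~3 holds for all $s>0$).
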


\section{Levi's reduction theorem}

F. W. Levi \cite{Levi} found in 1949 a class of numerical inequalities that
generates vector inequalities simply by replacing the real variables with
vectors in an arbitrary Euclidean space and the absolute value with the
Euclidean norm. His result is based on the following lemma.

\begin{lemma}
\label{lem1}$(a)$ Let $\Phi:\mathbb{R}^{N}\times S^{N-1}\rightarrow\mathbb{R}$
be a continuous function whose restriction to $S^{N-1}\times S^{N-1}$ is
invariant under the action of the special orthogonal group \textrm{SO}$(N),$
that is,%
\[
\Phi\left(  S\mathbf{y},S\mathbf{x}\right)  =\Phi(\mathbf{y},\mathbf{x})
\]
for all $\mathbf{y},\mathbf{x}\in S^{N-1}$ and all orthogonal matrices $S$ of
determinant $1$ and dimension $N\times N.$ Then the integral%
\[
I(\mathbf{y})=\int_{S^{N-1}}\Phi(\mathbf{y},\mathbf{x})d\sigma(\mathbf{x}%
),\text{\quad}\mathbf{y}\in\mathbb{R}^{N},
\]
is constant on $S^{N-1}$.

$(b)$ Suppose in addition that $\Phi$ also verifies the following condition of
homogeneity:%
\[
\Phi(\lambda\mathbf{y},\mathbf{x})=\varphi(\lambda)\Phi(\mathbf{y}%
,\mathbf{x})
\]
for a certain continuous function $\varphi:[0,\infty)\rightarrow\mathbb{R}$
and all $\lambda\in\mathbb{R}_{+},$ $\mathbf{y}\in\mathbb{R}^{N}$ and
$\mathbf{x}\in S^{N-1}.$ Then%
\[
I(y)=\int_{S^{N-1}}\Phi(\mathbf{y},\mathbf{x})d\sigma(\mathbf{x})=C(\Phi
)\cdot\varphi(\left\Vert \mathbf{y}\right\Vert _{2})\quad\text{for all }%
y\in\mathbb{R}^{N},
\]
where $C(\Phi)$ is the value of constancy of $I(\mathbf{y})$ on $S^{N-1}$.
\end{lemma}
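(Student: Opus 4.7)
\medskip
\noindent\textbf{Proof proposal.} The plan is to exploit two basic facts about the Haar measure $\sigma$ on the sphere: (i) $\sigma$ is invariant under the action of $\mathrm{SO}(N)$, and (ii) $\mathrm{SO}(N)$ acts transitively on $S^{N-1}$. Part (a) then reduces to a change of variable, and part (b) follows by radial rescaling.

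For part (a), fix two points $\mathbf{y}_{1},\mathbf{y}_{2}\in S^{N-1}$ and pick $S\in\mathrm{SO}(N)$ with $S\mathbf{y}_{1}=\mathbf{y}_{2}$; such an $S$ exists because $\mathrm{SO}(N)$ acts transitively on $S^{N-1}$. Then, using the invariance hypothesis on $\Phi\big|_{S^{N-1}\times S^{N-1}}$ together with the rotational invariance of $\sigma$, I would compute
\[
I(\mathbf{y}_{2}) = \int_{S^{N-1}} \Phi(S\mathbf{y}_{1},\mathbf{x})\,d\sigma(\mathbf{x})
= \int_{S^{N-1}} \Phi\bigl(S\mathbf{y}_{1},S(S^{-1}\mathbf{x})\bigr)\,d\sigma(\mathbf{x})
= \int_{S^{N-1}} \Phi(\mathbf{y}_{1},S^{-1}\mathbf{x})\,d\sigma(\mathbf{x}),
\]
and then the change of variable $\mathbf{u}=S^{-1}\mathbf{x}$ (permissible because $\sigma$ is $\mathrm{SO}(N)$-invariant) yields $I(\mathbf{y}_{2})=\int_{S^{N-1}}\Phi(\mathbf{y}_{1},\mathbf{u})\,d\sigma(\mathbf{u})=I(\mathbf{y}_{1})$. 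Since $\mathbf{y}_{1},\mathbf{y}_{2}$ were arbitrary, $I$ is constant on $S^{N-1}$; call that constant $C(\Phi)$.

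For part (b), split according to whether $\mathbf{y}$ vanishes. If $\mathbf{y}\neq 0$, write $\mathbf{y}=\left\Vert\mathbf{y}\right\Vert_{2}\cdot\mathbf{y}/\left\Vert\mathbf{y}\right\Vert_{2}$ with $\mathbf{y}/\left\Vert\mathbf{y}\right\Vert_{2}\in S^{N-1}$, and use the homogeneity hypothesis to pull the scalar out of $\Phi$:
\[
I(\mathbf{y}) = \int_{S^{N-1}} \Phi\bigl(\left\Vert\mathbf{y}\right\Vert_{2}\cdot\mathbf{y}/\left\Vert\mathbf{y}\right\Vert_{2},\mathbf{x}\bigr)\,d\sigma(\mathbf{x})
= \varphi(\left\Vert\mathbf{y}\right\Vert_{2})\int_{S^{N-1}} \Phi(\mathbf{y}/\left\Vert\mathbf{y}\right\Vert_{2},\mathbf{x})\,d\sigma(\mathbf{x})
= \varphi(\left\Vert\mathbf{y}\right\Vert_{2})\cdot C(\Phi),
\]
the last equality by part (a). For $\mathbf{y}=0$, apply the homogeneity relation with $\lambda=0$ and any fixed $\mathbf{y}'\in S^{N-1}$, which gives $\Phi(0,\mathbf{x})=\varphi(0)\,\Phi(\mathbf{y}',\mathbf{x})$ and, after integration, $I(0)=\varphi(0)\cdot C(\Phi)$, matching the desired formula at $\left\Vert \mathbf{y}\right\Vert _{2}=0$.

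There is no real obstacle: the argument is essentially bookkeeping built on the two invariance properties of $\sigma$. The only points one must take care of are to verify that the homogeneity relation in (b) is compatible at $\lambda=0$ (which forces $\varphi(0)\Phi(\mathbf{y}',\mathbf{x})$ to be independent of $\mathbf{y}'$) and to use continuity of $\Phi$ only insofar as it guarantees that the integrals defining $I(\mathbf{y})$ converge. The potentially delicate point, if any, is justifying the transitivity-plus-change-of-variable step for a general $S\in\mathrm{SO}(N)$, but this is immediate from the fact that $\sigma$ is the unique (up to normalization) rotation-invariant Borel probability measure on $S^{N-1}$.
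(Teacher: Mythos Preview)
Your proposal is correct and follows exactly the approach the paper takes: the paper's proof of (a) simply invokes the transitivity of $\mathrm{SO}(N)$ on $S^{N-1}$ together with the $\mathrm{SO}(N)$-invariance of $d\sigma$, and dismisses (b) as ``straightforward''. You have merely written out these steps in full, including the change-of-variable computation and the radial rescaling, so there is nothing to add.
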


\begin{proof}
The constancy of $I(\mathbf{y})$ on $S^{N-1}$ is a consequence of the fact
that \textrm{SO}$(N)$ acts transitively on $S^{N-1}$ and the measure $d\sigma$
is invariant under the action of this group. The assertion $(b)$ is straightforward.
\end{proof}

Lemma \ref{lem1} yields the following transfer of real inequalities to their
vector companions in Euclidean spaces:

\begin{theorem}
\label{thm1}Suppose that $f:\mathbb{R}\rightarrow\mathbb{R}$ is a nonnegative
continuous function not identically zero and $\varphi:[0,\infty)\rightarrow
\mathbb{R}$ is a continuous function such that%
\[
f(\gamma t)=\varphi(\gamma)f(t)
\]
for all $\gamma\geq0$ and $t\in\mathbb{R}.$ If $A=(a_{ik})_{i,k}$ is a real
$m\times r$ matrix, $B=(b_{jk})_{j,k}$ is a real $n\times r$ matrix and
$(\lambda_{i})_{i=1}^{m}$ and $(\mu_{j})_{j=1}^{n}$ are two families of real
numbers such that%
\[
\sum\nolimits_{i=1}^{m}\lambda_{i}f\left(  \sum\nolimits_{k=1}^{r}a_{ik}%
t_{k}\right)  \leq\sum\nolimits_{j=1}^{n}\mu_{j}f\left(  \sum\nolimits_{k=1}%
^{r}b_{jk}t_{k}\right)
\]
for all $t_{1},...,t_{r}\in\mathbb{R},$ then%
\[
\sum\nolimits_{i=1}^{m}\lambda_{i}\varphi\left(  \left\Vert \sum
\nolimits_{k=1}^{r}a_{ik}\mathbf{x}_{k}\right\Vert _{2}\right)  \leq
\sum\nolimits_{j=1}^{n}\mu_{j}\varphi\left(  \left\Vert \sum\nolimits_{k=1}%
^{r}b_{jk}\mathbf{x}_{k}\right\Vert _{2}\right)
\]
for every family $\mathbf{x}_{1},...,\mathbf{x}_{r}$ of vectors in the
Euclidean space $\mathbb{R}^{N}$.
\end{theorem}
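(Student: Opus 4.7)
The plan is to apply Lemma \ref{lem1} to the function $\Phi:\mathbb{R}^{N}\times S^{N-1}\rightarrow\mathbb{R}$ defined by
\[
\Phi(\mathbf{y},\mathbf{x}):=f(\langle\mathbf{y},\mathbf{x}\rangle).
\]
Its invariance under $\mathrm{SO}(N)$ is immediate from the fact that orthogonal matrices preserve inner products, while the homogeneity $\Phi(\lambda\mathbf{y},\mathbf{x})=\varphi(\lambda)\Phi(\mathbf{y},\mathbf{x})$ is a direct restatement of the functional equation $f(\gamma t)=\varphi(\gamma)f(t)$. Lemma \ref{lem1}(b) therefore provides the key identity
\[
\int_{S^{N-1}}f(\langle\mathbf{y},\mathbf{u}\rangle)\,d\sigma(\mathbf{u})=C(\Phi)\cdot\varphi\left(\left\Vert \mathbf{y}\right\Vert _{2}\right)
\]
for every $\mathbf{y}\in\mathbb{R}^{N}$, where $C(\Phi)$ is a constant depending only on $\Phi$.

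Next, given an arbitrary family $\mathbf{x}_{1},\dots,\mathbf{x}_{r}\in\mathbb{R}^{N}$, I would substitute, for each fixed $\mathbf{u}\in S^{N-1}$, the real scalars $t_{k}:=\langle\mathbf{x}_{k},\mathbf{u}\rangle$ into the numerical hypothesis of the theorem. Linearity of the inner product turns $\sum_{k}a_{ik}t_{k}$ into $\langle\sum_{k}a_{ik}\mathbf{x}_{k},\mathbf{u}\rangle$, and analogously for the $b_{jk}$ sums, yielding a pointwise inequality in $\mathbf{u}$. Integrating this inequality against $\sigma$ over $S^{N-1}$ and invoking the identity above for each of the finitely many summands produces precisely the desired vector inequality, multiplied on both sides by the common factor $C(\Phi)$.

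The sole step requiring a separate argument is the strict positivity of $C(\Phi)$, which is needed in order to cancel this factor. Since $f$ is continuous, nonnegative, and not identically zero, there exist $t_{0}\in\mathbb{R}$ and an open neighborhood $U$ of $t_{0}$ on which $f$ is bounded below by a positive constant; for any chosen $\mathbf{u}_{0}\in S^{N-1}$, the set $\{\mathbf{u}\in S^{N-1}:\langle\mathbf{u}_{0},\mathbf{u}\rangle\in U\}$ has positive $\sigma$-measure, so
\[
C(\Phi)=\int_{S^{N-1}}f(\langle\mathbf{u}_{0},\mathbf{u}\rangle)\,d\sigma(\mathbf{u})>0.
\]
Apart from this verification, I anticipate no serious obstacle: the whole argument is a mechanical marriage of Lemma \ref{lem1} with term-by-term integration over $S^{N-1}$, its content being precisely that the rotational average of $f(\langle\,\cdot\,,\mathbf{u}\rangle)$ depends on its vector argument only through the Euclidean norm, and in the homogeneous manner prescribed by $\varphi$.
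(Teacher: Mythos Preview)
Your proposal is correct and follows exactly the paper's own proof: apply Lemma~\ref{lem1} to $\Phi(\mathbf{y},\mathbf{x})=f(\langle\mathbf{y},\mathbf{x}\rangle)$, substitute $t_{k}=\langle\mathbf{x}_{k},\mathbf{u}\rangle$ into the scalar hypothesis, integrate over $S^{N-1}$, and cancel the positive constant $C(\Phi)$. One small caution on your positivity argument for $C(\Phi)$: as written it requires the neighborhood $U$ to meet $[-1,1]$ (since $\langle\mathbf{u}_{0},\mathbf{u}\rangle$ takes values only there for $\mathbf{u}_{0},\mathbf{u}\in S^{N-1}$), but the homogeneity $f(\gamma t)=\varphi(\gamma)f(t)$ immediately lets you relocate the point of positivity into $(-1,1)$, so this is not a genuine obstacle.
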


\begin{proof}
We apply Lemma \ref{lem1} to the function%
\[
\Phi(\mathbf{y},\mathbf{x})=f\left(  \langle\mathbf{y},\mathbf{x}%
\rangle\right)  ,
\]
by noticing that in this case the value of constancy $C(\Phi)$ is strictly positive.

Fix arbitrarily $\mathbf{u}\in S^{r-1}$ and put $t_{k}=\langle\mathbf{x}%
_{k},\mathbf{u}\rangle.$ According to our hypotheses,
\[
\sum\nolimits_{i=1}^{m}\lambda_{i}f\left(  \langle\sum\nolimits_{k=1}%
^{r}a_{ik}\mathbf{x}_{k},\mathbf{u}\rangle\right)  \leq\sum\nolimits_{j=1}%
^{n}\mu_{j}f\left(  \langle\sum\nolimits_{k=1}^{r}b_{jk}\mathbf{x}%
_{k},\mathbf{u}\rangle\right)  ,
\]
whence, according to Lemma \ref{lem1} we infer that%
\begin{multline*}
C(\Phi)\sum\nolimits_{i=1}^{m}\lambda_{i}\varphi\left(  \left\Vert
\sum\nolimits_{k=1}^{r}a_{ik}\mathbf{x}_{k}\right\Vert _{2}\right)
=\sum\nolimits_{i=1}^{m}\lambda_{i}\int_{S^{N-1}}f\left(  \langle
\sum\nolimits_{j=1}^{r}a_{ij}\mathbf{x}_{j},\mathbf{u}\rangle\right)
d\mathbf{u}\\
\leq\sum\nolimits_{j=1}^{n}\mu_{j}\int_{S^{N-1}}f\left(  \langle
\sum\nolimits_{k=1}^{r}b_{jk}\mathbf{x}_{k},\mathbf{u}\rangle\right)
d\mathbf{u}=C(\Phi)\sum\nolimits_{j=1}^{n}\mu_{j}\varphi\left(  \left\Vert
\sum\nolimits_{k=1}^{r}b_{jk}\mathbf{x}_{k}\right\Vert _{2}\right)  .
\end{multline*}
The proof ends by simplifying the end sides by $C(\Phi).$
\end{proof}

In the particular case when $f(t)=\varphi(t)=\left\vert t\right\vert ^{p}$ and
$p>0$, Theorem \ref{thm1} reads as follows:

\begin{corollary}
\label{cor1}Suppose that $A=(a_{ik})_{i,k}$ is a real $m\times r$ matrix,
$B=(b_{jk})_{j,k}$ is a real $n\times r$ matrix and $(\lambda_{i})_{i=1}^{m}$
and $(\mu_{j})_{j=1}^{n}$ are two families of real numbers. If for some
$\alpha>0,$%
\[
\sum\nolimits_{i=1}^{m}\lambda_{i}\left\vert \sum\nolimits_{k=1}^{r}%
a_{ik}t_{k}\right\vert ^{\alpha}\leq\sum\nolimits_{j=1}^{n}\mu_{j}\left\vert
\sum\nolimits_{k=1}^{r}b_{jk}t_{k}\right\vert ^{\alpha}%
\]
for all $t_{1},...,t_{r}\in\mathbb{R},$ then%
\[
\sum\nolimits_{i=1}^{m}\lambda_{i}\left\Vert \sum\nolimits_{k=1}^{r}%
a_{ik}\mathbf{x}_{k}\right\Vert _{2}^{\alpha}\leq\sum\nolimits_{j=1}^{n}%
\mu_{j}\left\Vert \sum\nolimits_{k=1}^{r}b_{jk}\mathbf{x}_{k}\right\Vert
_{2}^{\alpha}%
\]
for every family $\mathbf{x}_{1},...,\mathbf{x}_{r}$ of vectors in the
Euclidean space $\mathbb{R}^{N}$.
\end{corollary}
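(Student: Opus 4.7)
The plan is to derive Corollary \ref{cor1} as an immediate specialization of Theorem \ref{thm1}. First, I would select $f(t) = |t|^{\alpha}$ and $\varphi(\gamma) = \gamma^{\alpha}$ (the latter viewed as a continuous function on $[0,\infty)$), and then verify the hypotheses of Theorem \ref{thm1}: continuity of $f$, the fact that $f$ is nonnegative and not identically zero, and the homogeneity relation
\[
f(\gamma t) = |\gamma t|^{\alpha} = \gamma^{\alpha} |t|^{\alpha} = \varphi(\gamma) f(t)
\]
for all $\gamma \geq 0$ and $t \in \mathbb{R}$. All three are routine; continuity of $\varphi$ at $\gamma = 0$ uses the assumption $\alpha > 0$.

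Next, I would observe that the hypothesized numerical inequality in the corollary is exactly the hypothesis of Theorem \ref{thm1} for this choice of $f$. Applying Theorem \ref{thm1} then yields
\[
\sum\nolimits_{i=1}^{m} \lambda_{i} \, \varphi\!\left( \left\Vert \sum\nolimits_{k=1}^{r} a_{ik} \mathbf{x}_{k} \right\Vert_{2} \right) \leq \sum\nolimits_{j=1}^{n} \mu_{j} \, \varphi\!\left( \left\Vert \sum\nolimits_{k=1}^{r} b_{jk} \mathbf{x}_{k} \right\Vert_{2} \right),
\]
and substituting $\varphi(s) = s^{\alpha}$ on both sides produces the desired inequality.

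Since the corollary is a pure specialization, there is no substantive obstacle. The only delicate point worth confirming is that the function $\varphi(\gamma) = \gamma^{\alpha}$ really is continuous on the closed half-line $[0,\infty)$ (as required by Theorem \ref{thm1}), and this is guaranteed by the assumption $\alpha > 0$, with the convention $0^{\alpha} = 0$.
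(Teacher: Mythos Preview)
Your proposal is correct and matches the paper's approach exactly: the paper introduces Corollary~\ref{cor1} with the sentence ``In the particular case when $f(t)=\varphi(t)=\left\vert t\right\vert ^{p}$ and $p>0$, Theorem~\ref{thm1} reads as follows,'' which is precisely the specialization you carry out (with $\alpha$ in place of $p$). Your added remark about continuity of $\varphi$ at $0$ requiring $\alpha>0$ is a welcome clarification.
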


\begin{corollary}
$($Levi's reduction theorem \emph{\cite{Levi}}$)$\label{corLevi} Every
piecewise linear inequality of the form
\[
\sum_{i=1}^{n}\lambda_{i}\left\vert \sum_{k=1}^{r}a_{ik}t_{k}\right\vert
\geq0,
\]
which holds for all $n$-tuples $t_{1},...,t_{n}$ of real numbers remains true
when absolute value function is replaced by the Euclidean norm and
$t_{1},...,t_{r}$ are arbitrary elements of $\mathbb{R}^{N}$.
\end{corollary}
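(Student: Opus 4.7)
The plan is to deduce Levi's reduction theorem directly from Corollary \ref{cor1} applied with exponent $\alpha=1$. The statement of Corollary \ref{cor1} is formulated as an inequality between two one-sided sums, whereas Levi's theorem deals with a single sum whose coefficients $\lambda_{i}$ may be of either sign, so a light rearrangement is all that separates the two formulations.

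Concretely, I would partition the index set $\{1,\dots,n\}$ into $I^{+}=\{i:\lambda_{i}\geq 0\}$ and $I^{-}=\{i:\lambda_{i}<0\}$, and rewrite the hypothesis
\[
\sum_{i=1}^{n}\lambda_{i}\Bigl|\sum_{k=1}^{r}a_{ik}t_{k}\Bigr|\geq 0\quad\text{for all }t_{1},\dots,t_{r}\in\mathbb{R}
\]
in the equivalent form
\[
\sum_{i\in I^{-}}(-\lambda_{i})\Bigl|\sum_{k=1}^{r}a_{ik}t_{k}\Bigr|\leq\sum_{i\in I^{+}}\lambda_{i}\Bigl|\sum_{k=1}^{r}a_{ik}t_{k}\Bigr|.
\]
After relabeling indices, this matches exactly the hypothesis of Corollary \ref{cor1} with $\alpha=1$, taking the matrix $A$ (resp.\ $B$) to consist of the rows $(a_{ik})_{k}$ with $i\in I^{-}$ (resp.\ $i\in I^{+}$) and the positive weights $(-\lambda_{i})_{i\in I^{-}}$ (resp.\ $(\lambda_{i})_{i\in I^{+}}$).

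An appeal to Corollary \ref{cor1} then yields
\[
\sum_{i\in I^{-}}(-\lambda_{i})\Bigl\|\sum_{k=1}^{r}a_{ik}\mathbf{x}_{k}\Bigr\|_{2}\leq\sum_{i\in I^{+}}\lambda_{i}\Bigl\|\sum_{k=1}^{r}a_{ik}\mathbf{x}_{k}\Bigr\|_{2}
\]
for every family $\mathbf{x}_{1},\dots,\mathbf{x}_{r}\in\mathbb{R}^{N}$; moving the left-hand side back across the inequality recovers the desired assertion. There is no real obstacle here, since the analytic substance has already been done in Lemma \ref{lem1} and Theorem \ref{thm1}: Levi's reduction theorem is essentially the specialization $f(t)=\varphi(t)=|t|$ of Theorem \ref{thm1}, with the only housekeeping being the sign-splitting needed to reconcile the one-sided formulation of the corollary with the two-sided formulation of the transfer theorem.
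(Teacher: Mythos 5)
Your proposal is correct and follows essentially the same route as the paper, which presents Levi's reduction theorem as an immediate specialization of Corollary \ref{cor1} (equivalently, of Theorem \ref{thm1} with $f=\varphi=|\cdot|$). The sign-splitting into $I^{+}$ and $I^{-}$ is harmless but not strictly necessary, since Corollary \ref{cor1} is stated for arbitrary \emph{real} families $(\lambda_i)$ and $(\mu_j)$, so one may simply take the right-hand side to be zero.
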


It is straightforward that both Corollaries \ref{cor1} and \ref{corLevi}
extend verbatim to the case of finite families of vectors in an arbitrary
inner product spaces.

Levi deduced the Hornich-Hlawka inequality as a consequence of Corollary
\ref{corLevi}, but his approach also yields the following more general
inequality,%
\begin{equation}
\sum\left\Vert \pm x_{1}\pm x_{2}\pm\cdots\pm x_{n}\right\Vert _{2}\geq
2\binom{n-1}{[(n-1)/2]}\sum_{k=1}^{n}\left\vert |x_{k}\right\Vert _{2},
\label{Hineq}%
\end{equation}
where the summation on the left is taken over all $2^{n}$ permutation of the
$\pm$ signs and $[t]$ is the integer part function (that is, the largest
integer less than or equal to $t$).

The R\u{a}dulescu brothers applied Levi's reduction theorem to prove the
following generalization of Dobrushin's inequality (see \cite{RR}, Theorem
3.3): Let $A=(a_{ij})_{i,j}$ be a real $m\times n$ dimensional matrix and $H$
a real Hilbert space. Then%
\begin{equation}
\sum_{i=1}^{m}\left\Vert \sum_{j=1}^{n}a_{ij}x_{j}\right\Vert \leq\bar{\alpha
}_{m,n}(A)\sum_{k=1}^{n}\left\Vert \mathbf{x}_{k}\right\Vert +(\left\Vert
A\right\Vert _{1}-\bar{\alpha}_{m,n}(A))\left\Vert \sum_{k=1}^{n}%
\mathbf{x}_{k}\right\Vert \label{HHD}%
\end{equation}
for all $\mathbf{x}_{1},...,\mathbf{x}_{n}\in H.$ Here%
\[
\bar{\alpha}_{m,n}(A)=\frac{1}{2}\max_{r,s}\sum_{i=1}^{m}\left\vert
a_{ir}-a_{is}\right\vert \quad\text{and\quad}\left\Vert A\right\Vert _{1}%
=\max_{1\leq j\leq n}\sum_{i=1}^{m}\left\vert a_{ij}\right\vert .
\]

The Hornich-Hlawka inequality can be recover from \ref{HHD} by considering the
particular case when $H=\mathbb{R}^{3}$ and%
\[
A=\left(
\begin{array}
[c]{ccc}%
0 & 1 & 1\\
1 & 0 & 1\\
1 & 1 & 0
\end{array}
\right)  .
\]
In this case $\bar{\alpha}_{m,n}(A)=1$ and $\left\Vert A\right\Vert _{1}=2.$

An unexpected generalization of the inequality (\ref{HHReal}) to the framework
of functions mixing suitable conditions of higher-order convexity has been
recently discovered by Ressel \cite{Res}. We state it here following
\cite{MN2023}:

\begin{theorem}
\label{thm_Ressel_alt}If $f:\mathbb{R}_{+}\rightarrow\mathbb{R}$ is a
continuous function of class $C^{3}$ on $(0,\infty)$ such that $f^{\prime}%
\geq0,$ $f^{\prime\prime}\leq0$ and $f^{\prime\prime\prime}\geq0,$ then%
\[
f\left(  \left\vert x\right\vert \right)  +f\left(  \left\vert y\right\vert
\right)  +f\left(  \left\vert z\right\vert \right)  +f\left(  \left\vert
x+y+z\right\vert \right)  \geq f\left(  \left\vert x+y\right\vert \right)
+f\left(  \left\vert y+z\right\vert \right)  +f\left(  \left\vert
z+x\right\vert \right)  +f(0)
\]
for all $x,y,z\in\mathbb{R}.$
\end{theorem}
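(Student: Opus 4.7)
The plan is to reduce the inequality to two canonical configurations by a sign and symmetry analysis, and then verify each by a short calculation using the hypotheses on the derivatives of $f$. The seven arguments of $f$ appearing in the inequality are invariant under $(x,y,z)\mapsto(-x,-y,-z)$ and under permutations of $x,y,z$, so I may assume $x+y+z\geq 0$ and $x\geq y\geq z$, which forces $x\geq 0$. Three configurations remain: (i) $y,z\geq 0$; (ii) $y\geq 0>z$; and (iii) $0>y\geq z$ with $x\geq -(y+z)$. Setting $u=-y$, $w=-z$, $e=x-u-w$ in (iii) turns the seven absolute values into $e,u,w,e+u+w,e+u,u+w,e+w$, which is exactly configuration (i) in the nonnegative variables $e,u,w$; so (iii) reduces to (i). For (ii) I would set $w=-z\geq 0$ and split into the three subcases $w\leq y$, $y<w\leq x$, $x<w\leq x+y$ (the case $w>x+y$ is excluded by $x+y+z\geq 0$); an affine change of variables in each subcase (for instance $\alpha=x-w$, $\beta=y-w$, $\gamma=w$ in the first) reduces the inequality to the single canonical form
\[
f(\alpha+\gamma)+f(\beta+\gamma)+f(\gamma)+f(\alpha+\beta+\gamma)\geq f(\alpha+\beta+2\gamma)+f(\alpha)+f(\beta)+f(0),\qquad\alpha,\beta,\gamma\geq 0,
\]
which I shall refer to as (B).

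Configuration (i) then follows from the iterated identity
\[
f(\alpha)+f(\beta)+f(\gamma)+f(\alpha+\beta+\gamma)-f(\alpha+\beta)-f(\beta+\gamma)-f(\alpha+\gamma)-f(0)=\int_0^\alpha\!\!\int_0^\beta\!\!\int_0^\gamma f'''(a+b+c)\,dc\,db\,da,
\]
verified by three applications of the fundamental theorem of calculus; the right-hand side is nonnegative because $f'''\geq 0$. This is the only step where the third-derivative hypothesis plays a role.

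For (B), with $\alpha,\beta\geq 0$ fixed let $H(\gamma)$ denote LHS $-$ RHS as a function of $\gamma\geq 0$. Direct substitution gives $H(0)=0$, and a one-line computation produces
\[
H'(\gamma)=\bigl[f'(\alpha+\gamma)-f'(\alpha+\beta+2\gamma)\bigr]+\bigl[f'(\beta+\gamma)-f'(\alpha+\beta+2\gamma)\bigr]+f'(\gamma)+f'(\alpha+\beta+\gamma).
\]
Each bracketed difference is nonnegative because $f'$ is nonincreasing (from $f''\leq 0$) and the argument on the left of each bracket is at most $\alpha+\beta+2\gamma$; the remaining two summands are nonnegative because $f'\geq 0$. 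Hence $H'(\gamma)\geq 0$, so $H(\gamma)\geq H(0)=0$. The main obstacle in this plan is the bookkeeping of the sign reduction, that is, checking that each of the three subcases of configuration (ii) truly collapses to (B) under the suggested substitution; once this is in place, the calculus in the remaining steps is essentially immediate.
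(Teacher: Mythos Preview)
The paper does not actually prove this theorem; it merely states the result, attributing it to Ressel~\cite{Res} and citing the formulation in~\cite{MN2023}. So there is no in-paper argument to compare against.

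That said, your plan is sound and essentially complete. The reduction of configuration~(iii) to~(i) is correct, and your proof of~(i) via the triple integral of $f'''$ is exactly the third-difference (``$3$-alternating'') viewpoint the paper alludes to in its final section. For configuration~(ii), the bookkeeping you flag as the main obstacle does work out: with $w=-z$, the substitutions
\[
(\alpha,\beta,\gamma)=
\begin{cases}
(x-w,\;y-w,\;w) & \text{if }w\le y,\\
(w-y,\;x-w,\;y) & \text{if }y<w\le x,\\
(w-y,\;w-x,\;x+y-w) & \text{if }x<w\le x+y,
\end{cases}
\]
each land in $[0,\infty)^3$ and transform the inequality precisely into your canonical form~(B); your computation of $H'(\gamma)\ge 0$ then finishes the job. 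The only technical wrinkle is that $f$ is assumed $C^{3}$ only on $(0,\infty)$, so both the triple-integral identity in~(i) and the differentiation in~(B) need a word when some argument vanishes; this is handled by first proving the inequality with all parameters strictly positive (equivalently, at a shifted base point $s>0$) and then passing to the boundary by the continuity of $f$ on $[0,\infty)$.
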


Since all the functions $t^{\alpha}$ $($for $0<\alpha\leq1)$ fulfill the
hypotheses of Theorem \ref{thm_Ressel_alt}, it follows that
\[
\left\vert x\right\vert ^{\alpha}+\left\vert y\right\vert ^{\alpha}+\left\vert
z\right\vert ^{\alpha}+\left\vert x+y+z\right\vert ^{\alpha}\geq\left\vert
x+y\right\vert ^{\alpha}+\left\vert y+z\right\vert ^{\alpha}+\left\vert
z+x\right\vert ^{\alpha}\text{\quad}%
\]
for all $x,y,z\in\mathbb{R}$ and $\alpha\in(0,1].$ Combining this with
Corollary \ref{cor1} one obtains the following generalization of the
Hornich-Hlawka inequality:

\begin{theorem}
\label{thm_Ress_alpha}In every inner product space $E=(E,\left\Vert
\cdot\right\Vert _{2}),$%
\[
\left\Vert \mathbf{x}\right\Vert _{2}^{\alpha}+\left\Vert \mathbf{y}%
\right\Vert _{2}^{\alpha}+\left\Vert \mathbf{z}\right\Vert _{2}^{\alpha
}+\left\Vert \mathbf{x}+\mathbf{y}+\mathbf{z}\right\Vert _{2}^{\alpha}%
\geq\left\Vert \mathbf{x}+\mathbf{y}\right\Vert _{2}^{\alpha}+\left\Vert
\mathbf{y}+\mathbf{z}\right\Vert _{2}^{\alpha}+\left\Vert \mathbf{z}%
+\mathbf{x}\right\Vert _{2}^{\alpha}.
\]
for all $\mathbf{x},\mathbf{y},\mathbf{z}\in E$ and all $\alpha\in(0,1].$
\end{theorem}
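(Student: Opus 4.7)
The plan is to combine Ressel's theorem (Theorem \ref{thm_Ressel_alt}) with the transfer principle encoded in Corollary \ref{cor1}, exactly as the discussion preceding the statement suggests. First I would verify the hypotheses of Theorem \ref{thm_Ressel_alt} for the function $f(t)=t^{\alpha}$ on $\mathbb{R}_{+}$ with $\alpha\in(0,1]$: it is continuous on $\mathbb{R}_{+}$, of class $C^{3}$ on $(0,\infty)$, with $f'(t)=\alpha t^{\alpha-1}\geq 0$, $f''(t)=\alpha(\alpha-1)t^{\alpha-2}\leq 0$, and $f'''(t)=\alpha(\alpha-1)(\alpha-2)t^{\alpha-3}\geq 0$ (the last because two of the three factors $\alpha$, $\alpha-1$, $\alpha-2$ are non-positive). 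Since $f(0)=0$, Theorem \ref{thm_Ressel_alt} immediately yields the scalar inequality
\[
|x|^{\alpha}+|y|^{\alpha}+|z|^{\alpha}+|x+y+z|^{\alpha}\geq|x+y|^{\alpha}+|y+z|^{\alpha}+|z+x|^{\alpha}
\]
for all $x,y,z\in\mathbb{R}$.

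Second, I would feed this scalar inequality into Corollary \ref{cor1} with $r=3$, $t_{1}=x$, $t_{2}=y$, $t_{3}=z$, and exponent $\alpha$. Taking the $3\times 3$ matrix $A$ whose rows are $(1,1,0)$, $(0,1,1)$, $(1,0,1)$ with weights $\lambda_{i}=1$, and the $4\times 3$ matrix $B$ whose rows are $(1,0,0)$, $(0,1,0)$, $(0,0,1)$, $(1,1,1)$ with weights $\mu_{j}=1$, the scalar hypothesis of Corollary \ref{cor1} is precisely the Ressel inequality above. The conclusion then delivers the desired norm inequality in the Euclidean space $\mathbb{R}^{N}$.

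Third, to promote this from $\mathbb{R}^{N}$ to an arbitrary (possibly infinite-dimensional, real or complex) inner product space $E$, I would restrict attention to the real linear span of $\mathbf{x},\mathbf{y},\mathbf{z}$ in $E$, which has dimension at most $3$. Equipped with the restriction of the inner product, this span is isometric (as a real inner product space) to a subspace of some $\mathbb{R}^{N}$, so Corollary \ref{cor1} applies; the paper explicitly flags this extension in the sentence following Corollary \ref{corLevi}.

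No serious obstacle is expected: the genuinely hard work has been absorbed into Theorem \ref{thm_Ressel_alt} and into Levi's reduction theorem. The only points requiring minimal care are (i) checking the sign pattern of the three derivatives of $t^{\alpha}$ at the endpoint $\alpha=1$, where $f''$ and $f'''$ vanish identically (the hypotheses of Theorem \ref{thm_Ressel_alt} ask only for $f''\leq 0$ and $f'''\geq 0$, so this is fine and recovers the classical \eqref{HH}), and (ii) the reduction to a finite-dimensional real subspace when $E$ is arbitrary, which is automatic since only three vectors intervene.
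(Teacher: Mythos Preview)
Your proposal is correct and follows exactly the paper's approach: apply Theorem \ref{thm_Ressel_alt} to $f(t)=t^{\alpha}$ to obtain the scalar inequality, then invoke Corollary \ref{cor1} to lift it to inner product spaces. You simply spell out the derivative checks, the choice of matrices, and the finite-dimensional reduction that the paper leaves implicit.
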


Theorem \ref{thm_Ress_alpha} improves a previous result due to P. Ressel
\cite{Res}, Theorem 2, who noticed it, for the exponents $\alpha
\in\{2,1,1/2,1/2^{2},...\}.$

Applying Theorem \ref{thm_Ressel_alt} to the function $f(x)=\ln(1+x)$ one
obtains the multiplicative analog of the inequality (\ref{HHReal}),
\begin{multline*}
(1+\left\vert x\right\vert )(1+\left\vert y\right\vert )(1+\left\vert
z\right\vert )(1+\left\vert x+y+z\right\vert )\allowbreak\\
\geq(1+\left\vert x+y\right\vert )(1+\left\vert y+z\right\vert )(1+\left\vert
z+x\right\vert ),
\end{multline*}
for all $x,y,z\in\mathbb{R}.$ No extension of this remark to the case of inner
product spaces is known.

\begin{problem}
The Fréchet identity shows that \ the Theorem \ref{thm_Ress_alpha} also works
for $\alpha=2.$ For which other values {}{}of $\alpha\in(1,2)$ does this
theorem remain true?
\end{problem}

\section{The generalization of a result due to Ressel}

As was mentioned in the previous section, Ressel showed that the Fréchet
identity implies the truth of Theorem \ref{thm_Ress_alpha} for the exponents
$\alpha\in\{2,1,1/2,1/2^{2},...\}.$ A more general result was noticed by X.
Luo \cite{Luo2020}, who considered the framework of Banach spaces:

\begin{theorem}
\label{thm_1/2}Suppose $E$ is a vector space and $\varphi:E\rightarrow
\mathbb{R}$ is a function which verifies the condition%
\begin{equation}
\left\vert \varphi(\mathbf{x})-\varphi(\mathbf{y})\right\vert \leq
\varphi(\mathbf{x}+\mathbf{y})\leq\varphi(\mathbf{x})+\varphi(\mathbf{y})
\label{Luo}%
\end{equation}
for all $\mathbf{x},\mathbf{y}\in E$. If $\varphi^{2}$ verifies the
Hornich-Hlawka functional inequality, that is, if%
\begin{equation}
\sum\limits_{k=1}^{3}{\varphi}^{2}{\left(  \mathbf{x}_{k}\right)  }+{\varphi
}^{2}\left(  {\sum\limits_{k=1}^{3}}\mathbf{x}_{k}\right)  \geq\sum
\limits_{1\leq{i}<{j}\leq3}{\varphi}^{2}{\left(  \mathbf{x}{{_{i}+}}%
\mathbf{x}_{j}\right)  } \label{HH_functEq}%
\end{equation}
for all $\mathbf{x}_{1},\mathbf{x}_{2},\mathbf{x}_{3}\in E,$ then so does
$\varphi.$
\end{theorem}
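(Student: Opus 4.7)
The plan is to mimic Hlawka's original algebraic identity (displayed in the Introduction), but to replace the use of the Fr\'echet equality by the Hornich-Hlawka hypothesis on $\varphi^{2}$. Abbreviate $a=\varphi(\mathbf{x}_{1})$, $b=\varphi(\mathbf{x}_{2})$, $c=\varphi(\mathbf{x}_{3})$, $d=\varphi(\mathbf{x}_{1}+\mathbf{x}_{2}+\mathbf{x}_{3})$, $p=\varphi(\mathbf{x}_{1}+\mathbf{x}_{2})$, $q=\varphi(\mathbf{x}_{2}+\mathbf{x}_{3})$, $r=\varphi(\mathbf{x}_{3}+\mathbf{x}_{1})$, and $S=a+b+c+d$; the goal is to prove $S\ge p+q+r$.

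The core step is the polynomial identity
\begin{multline*}
S(S-p-q-r) = (a+b-p)(c+d-p)+(b+c-q)(a+d-q)+(c+a-r)(b+d-r)\\
+\bigl[a^{2}+b^{2}+c^{2}+d^{2}-p^{2}-q^{2}-r^{2}\bigr],
\end{multline*}
which one verifies by routine expansion and which is precisely Hlawka's original decomposition rewritten so that the Fr\'echet-type term, instead of cancelling, is carried along as an explicit remainder.

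Each summand on the right is nonnegative. The upper bound $\varphi(\mathbf{x}+\mathbf{y})\le\varphi(\mathbf{x})+\varphi(\mathbf{y})$ in \eqref{Luo} gives $a+b\ge p$, $b+c\ge q$ and $c+a\ge r$. The lower bound $|\varphi(\mathbf{x})-\varphi(\mathbf{y})|\le\varphi(\mathbf{x}+\mathbf{y})$, applied with $\mathbf{x}=\mathbf{x}_{k}$ and $\mathbf{y}$ the sum of the remaining two variables, yields $c+d\ge p$, $a+d\ge q$ and $b+d\ge r$. The bracketed quantity is nonnegative by the hypothesis \eqref{HH_functEq} on $\varphi^{2}$. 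Finally, writing any $\mathbf{z}=\mathbf{z}/2+\mathbf{z}/2$ in that same lower bound shows $\varphi\ge 0$, so $S\ge 0$.

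Hence $S(S-p-q-r)\ge 0$. If $S>0$ we divide to conclude; if $S=0$, then $a=b=c=d=0$ and the upper bounds $p\le a+b$, $q\le b+c$, $r\le c+a$ force $p=q=r=0$, so the inequality is trivial. The only genuinely nontrivial step is spotting the algebraic identity above; once it is on the table, the argument runs entirely in parallel to Hlawka's classical one, with the $\varphi^{2}$-Hornich-Hlawka hypothesis taking the role of the Fr\'echet parallelogram equality.
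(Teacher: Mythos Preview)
Your proof is correct, and it takes a genuinely different route from the paper's. The paper isolates a purely numerical lemma in the style of Smiley--Smiley: with $a,b,c,d,x,y,z\ge 0$ satisfying $x\le a+b$, $y\le b+c$, $z\le c+a$, the two-sided bound $a+b+c\ge d$ together with $a+b+c+d\ge\max\{a+b+x,b+c+y,c+a+z\}$, and the squares condition $a^{2}+b^{2}+c^{2}+d^{2}\ge x^{2}+y^{2}+z^{2}$, force $a+b+c+d\ge x+y+z$. Its argument factors $(a+b+c)^{2}-d^{2}$, applies the squares hypothesis, rewrites the result as $\sum(a+b+x)(a+b-x)$, and then bounds this by the maximum factor times the sum of the other factors; the final division uses the $\max$-condition coming from the lower half of \eqref{Luo}.

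Your approach instead produces the clean Hlawka-type identity
\[
S(S-p-q-r)=\sum_{\mathrm{cyc}}(a+b-p)(c+d-p)+\bigl[a^{2}+b^{2}+c^{2}+d^{2}-p^{2}-q^{2}-r^{2}\bigr],
\]
which is exactly Hlawka's original decomposition with the Fr\'echet defect carried along rather than set to zero. This is more transparent: each factor is directly controlled by one half of \eqref{Luo}, the bracket by the $\varphi^{2}$-hypothesis, and no detour through a $\max$ is needed. The paper's route has the minor advantage of packaging the numerical step as a stand-alone lemma; yours has the advantage of making the relationship to Hlawka's classical proof completely explicit and of avoiding the somewhat artificial condition~$(ii)$ in that lemma.
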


Clearly, every function $\varphi$ which verifies the condition (\ref{Luo}) is
subadditive and nonnegative. Moreover, every even and subadditive function
$\varphi:E\rightarrow\mathbb{R}$ (in particular, every seminorm) verifies the
condition (\ref{Luo}).

Taking into account the Fréchet identity, it is now clear that Theorem
\ref{thm_1/2} implies the original Hornich-Hlawka inequality.

If $\varphi:E\rightarrow\mathbb{R}$ is even and subadditive and $S:\mathbb{R}%
_{+}\mathbb{\rightarrow R}_{+}$ is a concave function, then $S\circ\varphi$
verifies the condition (\ref{Luo}). Indeed, the function $S$ is subadditive
and increasing, which implies that $S\circ\varphi$ is an even and subadditive
function. This implies the following

\begin{corollary}
If $E=(E,\left\Vert \cdot\right\Vert )$ is a Hornich-Hlawka space, then
$\left\Vert \cdot\right\Vert ^{\alpha}$ verifies the Hornich-Hlawka inequality
for all exponents $\alpha\in\{1,1/2,1/2^{2},...\}.$
\end{corollary}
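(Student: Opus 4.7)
The plan is to prove the corollary by induction on $n$, where $\alpha = 1/2^n$, using Theorem \ref{thm_1/2} (Luo's result) as the induction engine. The base case $n=0$, that is $\alpha=1$, is precisely the assumption that $E$ is a Hornich-Hlawka space. So the real work lies in the inductive step: assuming $\left\Vert \cdot\right\Vert ^{1/2^{n}}$ satisfies the Hornich-Hlawka functional inequality \eqref{HH_functEq}, show the same for $\left\Vert \cdot\right\Vert ^{1/2^{n+1}}$.

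To perform the inductive step, set $\varphi(\mathbf{x})=\left\Vert \mathbf{x}\right\Vert ^{1/2^{n+1}}$. Then $\varphi^{2}(\mathbf{x})=\left\Vert \mathbf{x}\right\Vert ^{1/2^{n}}$, which satisfies \eqref{HH_functEq} by the induction hypothesis. Thus, provided $\varphi$ itself fulfills Luo's two-sided condition \eqref{Luo}, Theorem \ref{thm_1/2} immediately yields that $\varphi$ satisfies \eqref{HH_functEq}, which is what we need.

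It remains to verify condition \eqref{Luo} for $\varphi$. Here I would use the observation stated in the paragraph just before the corollary: if $\psi:E\to\mathbb{R}$ is even and subadditive and $S:\mathbb{R}_{+}\to\mathbb{R}_{+}$ is concave, then $S\circ\psi$ verifies \eqref{Luo}. Apply this with $\psi=\left\Vert \cdot\right\Vert $, which is even and subadditive as a norm, and $S(t)=t^{1/2^{n+1}}$, which is concave and nondecreasing on $[0,\infty)$. The composition $S\circ\psi=\varphi$ is then even and subadditive (since $S$ is subadditive and increasing, and $\psi$ is subadditive), hence satisfies \eqref{Luo}.

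I do not expect any serious obstacle: the combinatorial content is fully absorbed by Theorem \ref{thm_1/2}, and the verification of \eqref{Luo} is a direct application of the concavity remark. The only point that deserves a line of care is the induction bookkeeping (the exponent halves at each step, covering exactly the sequence $1,1/2,1/4,\ldots$), together with noting that $t\mapsto t^{1/2^{n+1}}$ is indeed concave on $\mathbb{R}_{+}$ for every $n\geq 0$, which is clear since the exponent lies in $(0,1]$.
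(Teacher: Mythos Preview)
Your proposal is correct and follows exactly the route the paper intends: induction on $n$ with $\alpha=1/2^{n}$, using Theorem \ref{thm_1/2} for the inductive step and the concavity remark preceding the corollary to check condition \eqref{Luo} for $\varphi=\left\Vert \cdot\right\Vert ^{1/2^{n+1}}$. There is nothing to add; you have simply made explicit what the paper leaves implicit in the phrase ``This implies the following.''
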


Our proof of Theorem \ref{thm_1/2} is based on the following auxiliary result,
inspired by a paper of D. M. Smiley and M. F. Smiley \cite{SmSm}.

\begin{lemma}
\label{lem_1/2}Let $a,b,c,d,x,y,z$ nonnegative real numbers such that the
following three conditions are fulfilled:

$(i)\ x\leq a+b,\ y\leq b+c~$and $z\leq c+a;$

$(ii)\ a+b+c\geq d~$and $a+b+c+d\geq\max\left\{  a+b+x,b+c+y,c+a+z\right\}  ;$

$(iii)\ a^{2}+b^{2}+c^{2}+d^{2}\geq x^{2}+y^{2}+z^{2}.$

Then $a+b+c+d\geq x+y+z.$
\end{lemma}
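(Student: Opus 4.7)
The plan is to mimic Hlawka's original algebraic identity, but adapted to the seven-variable setting of the lemma. Specifically, I would introduce the six differences
\[
a+b-x,\ c+d-x,\ b+c-y,\ a+d-y,\ c+a-z,\ b+d-z,
\]
all of which are nonnegative: the first in each pair follows from hypothesis $(i)$, and the second from the inequalities $c+d\ge x$, $a+d\ge y$, $b+d\ge z$, which are exactly what the second half of $(ii)$ says (after cancelling $a+b$, $b+c$, $c+a$ respectively from the $a+b+c+d\ge\cdots$ inequalities).

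Next, I would establish the key algebraic identity
\[
(a+b+c+d)\bigl(a+b+c+d-x-y-z\bigr)=\Sigma+\Delta,
\]
where
\[
\Sigma=(a+b-x)(c+d-x)+(b+c-y)(a+d-y)+(c+a-z)(b+d-z)
\]
and $\Delta=(a^2+b^2+c^2+d^2)-(x^2+y^2+z^2)$. The verification is routine: expanding $\Sigma$, the cross-terms in $x,y,z$ assemble into $-(x+y+z)(a+b+c+d)+x^2+y^2+z^2$, while the products $(a+b)(c+d)+(b+c)(a+d)+(c+a)(b+d)$ simplify to $(a+b+c+d)^2-(a^2+b^2+c^2+d^2)$, and the pieces combine to yield the claimed identity.

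Once the identity is in place, the conclusion is immediate: each summand in $\Sigma$ is a product of two nonnegative factors, hence $\Sigma\ge 0$, and $\Delta\ge 0$ by hypothesis $(iii)$. Therefore $(a+b+c+d)(a+b+c+d-x-y-z)\ge 0$. If $a+b+c+d>0$, we may divide to obtain the desired inequality $a+b+c+d\ge x+y+z$; if $a+b+c+d=0$, then $a=b=c=d=0$, and $(i)$ forces $x=y=z=0$ so the inequality is trivial.

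The main (modest) obstacle is locating the right identity: the Hlawka identity is tailored to six quantities (three vectors, their three pairwise sums, and the total), while here we have a seventh quantity $d$ playing the role of the total sum's \emph{norm}, linked to $a,b,c$ only through the one-sided inequalities $(ii)$ and $(iii)$. The trick is that the identity above still works verbatim once one notices that $\Delta$ absorbs precisely the defect $(a^2+b^2+c^2+d^2)-(x^2+y^2+z^2)$, which is exactly what Fréchet's identity would make vanish in the original Hilbert-space setting. In this way Smiley--Smiley's reduction to a purely elementary numerical statement goes through.
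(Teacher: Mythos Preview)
Your proof is correct and the identity checks out: with $S=a+b+c+d$ one has
\[
(a+b)(c+d)+(b+c)(a+d)+(c+a)(b+d)=S^{2}-(a^{2}+b^{2}+c^{2}+d^{2}),
\]
so $\Sigma=S^{2}-(a^{2}+b^{2}+c^{2}+d^{2})-S(x+y+z)+x^{2}+y^{2}+z^{2}$ and $\Sigma+\Delta=S^{2}-S(x+y+z)$, exactly the left-hand side. The nonnegativity of all six factors in $\Sigma$ follows precisely as you say.

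Your route is genuinely different from the paper's. The paper does not produce a single identity; instead it factors $(a+b+c)^{2}-d^{2}=(a+b+c+d)(a+b+c-d)$, uses $(iii)$ to bound the left side by $2(a^{2}+b^{2}+c^{2}+ab+bc+ca)-x^{2}-y^{2}-z^{2}$, rewrites this as $\sum(a+b+x)(a+b-x)$, and then crudely replaces each factor $a+b+x$, etc., by their maximum $M$ before invoking $a+b+c+d\geq M$ from $(ii)$ to cancel. Your argument is closer in spirit to Hlawka's original identity, avoids the max-estimate entirely, and---worth noting---never uses the hypothesis $a+b+c\geq d$ from $(ii)$, so it actually proves a slightly sharper lemma. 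The paper's argument has the minor advantage of being discoverable by straightforward estimation without guessing an identity, but yours is the cleaner of the two.
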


\begin{proof}
Indeed,%
\begin{multline*}
\left(  a+b+c\right)  ^{2}-d^{2}=a^{2}+b^{2}+c^{2}+2ab+2ac+2bc-d^{2}\\
\leq2\left(  a^{2}+b^{2}+c^{2}+ab+bc+ca\right)  -x^{2}-y^{2}-z^{2}\\
=\left(  a+b+x\right)  \left(  a+b-x\right)  +\left(  b+c+y\right)  \left(
b+c-y\right)  +\left(  c+a+z\right)  \left(  c+a-z\right) \\
\leq\left(  2a+2b+2c-x-y-z\right)  \max\left\{  a+b+x,b+c+y,c+a+z\right\}  ,
\end{multline*}
whence%
\begin{multline*}
\left(  a+b+c+d\right)  \left(  a+b+c-d\right) \\
\leq\left(  2a+2b+2c-x-y-z\right)  \max\left\{  a+b+x,b+c+y,c+a+z\right\}  .
\end{multline*}

Combining the last inequality with $\left(  ii\right)  $ we get%
\[
a+b+c-d\leq2a+2b+2c-x-y-z,
\]
that is,
\[
x+y+z\leq a+b+c+d.
\]

\end{proof}

\begin{proof}
[Proof of Thorem \ref{thm_1/2}]Put
\begin{align*}
a  &  =\varphi\left(  x_{1}\right)  ,\ b=\varphi\left(  x_{2}\right)
,\ c=\varphi\left(  x_{3}\right)  ,\ d=\varphi\left(  x_{1}+x_{2}%
+x_{3}\right)  ,\\
x  &  =\varphi\left(  x_{1}+x_{2}\right)  ,\ y=\varphi\left(  x_{2}%
+x_{3}\right)  ,\ z=\varphi\left(  x_{3}+x_{1}\right)  .
\end{align*}

Clearly $a,b,c,d,x,y,z$ are nonnegative numbers that verify condition $\left(
i\right)  $ of Lemma \ref{lem_1/2} because $\varphi$ is a subadditive
function. Condition $\left(  ii\right)  $ follows from the fact that $\varphi$
verifies the condition (\ref{Luo}), while condition $(iii)$ represents the
fact that $\varphi^{2}$ verifies the Hornich-Hlawka inequality.

Therefore the conclusion of Theorem \ref{thm_1/2} is a direct consequence of
Lemma \ref{lem_1/2}.
\end{proof}

\section{The case of arbitrary large families of elements}

The aforementioned paper of D. M. Smiley and M. F. Smiley \cite{SmSm} led\ P.
M. Vasi\'{c} and D. D. Adamovi\'{c} \cite{VA1968} to an inductive scheme
generating inequalities. We state here a slightly modified version of their
result as appeared in \cite{MPF}, Theorem 2, p. 528:

\begin{theorem}
\label{thmVA}Suppose that $\varphi$ is real-valued function defined on a
commutative additive semigroup $\mathcal{S}$ such that%
\[
\sum\limits_{1\leq{i}<{j}\leq3}{\varphi\left(  {{x{_{i}+x}}}_{j}\right)  }%
\leq\sum\limits_{k=1}^{3}{\varphi\left(  x_{k}\right)  }+{\varphi}\left(
{\sum\limits_{k=1}^{3}}x_{k}\right)
\]
for all $x_{1},x_{2},x_{3}\in\mathcal{S}.$ Then for each pair $\left\{
k,n\right\}  $ of integers with $2\leq k<n$ we also have%
\[
\sum\limits_{1\leq{i_{1}}<...<{i_{k}}\leq n}{\varphi\left(  {\sum
\limits_{j=1}^{k}{x{_{i_{j}}}}}\right)  }\leq\binom{n-2}{k-1}\sum
\limits_{k=1}^{n}{\varphi\left(  x_{k}\right)  }+\binom{n-2}{k-2}{\varphi
}\left(  {\sum\limits_{k=1}^{n}}x_{k}\right)
\]
whenever $x_{1},...,x_{n}\in\mathcal{S}.$
\end{theorem}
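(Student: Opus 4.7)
The plan is induction on $n$, with the base case $n=3$, $k=2$ being the stated hypothesis (since $\binom{1}{1} = \binom{1}{0} = 1$). The engine is the three-variable Hornich--Hlawka inequality applied to triples of partial sums formed from $x_1, \ldots, x_n$.

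For the inductive step I would rely on a \emph{merging substitution}. For each pair $\{i,j\} \subset \{1,\ldots,n\}$, apply the inductive hypothesis at size $n-1$ to the $(n-1)$-tuple obtained by replacing $x_i$ and $x_j$ with the single element $x_i + x_j$. A $k$-subset of the new $(n-1)$-indexed list corresponds either to a $k$-subset of $\{1,\ldots,n\}$ disjoint from $\{i,j\}$ (if the merged index is not chosen) or to a $(k+1)$-subset of $\{1,\ldots,n\}$ containing both $i$ and $j$ (if it is). Summing over all $\binom{n}{2}$ pairs and counting multiplicities -- a $k$-subset $I$ appears $\binom{n-k}{2}$ times and a $(k+1)$-subset $I'$ appears $\binom{k+1}{2}$ times -- yields a linear inequality coupling
\[
f_k := \sum_{|I|=k} \varphi(S_I) \quad\text{with}\quad f_{k+1}, \; f_1 = \sum_i \varphi(x_i), \; f_2, \; \text{and} \; \varphi(S),
\]
where $S_I := \sum_{l \in I} x_l$ and $S = S_{\{1,\ldots,n\}}$.

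Since the merging recursion couples $f_k$ with $f_{k+1}$, I would run the induction on $k$ in the \emph{downward} direction. The top case $k = n-1$ can be handled directly: apply the Hornich--Hlawka inequality to each triple $(x_i,\, x_j,\, S - x_i - x_j)$ (where $S - x_i - x_j$ denotes the sum of the remaining $n-2$ elements, which lies in $\mathcal{S}$ since the semigroup is closed under addition) and sum over $i < j$; together with the $(n-1, n-2)$-case of the claim supplied by the outer induction on $n$, this yields the $k = n-1$ inequality. The merging inequality then descends through $k = n-2, n-3, \ldots, 2$.

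The main obstacle is the combinatorial bookkeeping: the binomial coefficients spawned by the various summations must collapse, via Pascal- and Vandermonde-type identities, to exactly $\binom{n-2}{k-1}$ in front of $f_1$ and $\binom{n-2}{k-2}$ in front of $\varphi(S)$. A clean way to organize this is to track the deficit
\[
D_k(x_1,\ldots,x_n) := \binom{n-2}{k-1}\,f_1 + \binom{n-2}{k-2}\,\varphi(S) - f_k
\]
and to show that $D_k \ge 0$ follows from $D_{k+1} \ge 0$ at the current $n$ together with the $(n-1)$-variable deficits furnished by the outer induction. The algebraic identities needed are routine in principle, but keeping them straight across the double induction is where the real work lives.
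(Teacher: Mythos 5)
The paper states this theorem without proof (it is quoted from Mitrinovi\'{c}--Pe\v{c}ari\'{c}--Fink), so your attempt must be judged on its own terms, and it has a genuine directional flaw. In your notation ($f_k=\sum_{|I|=k}\varphi(S_I)$, $S=x_1+\dots+x_n$), the merging recursion reads, after summing over all pairs $\{i,j\}$,
\[
\binom{n-k}{2}f_k+\binom{k+1}{2}f_{k+1}\le \binom{n-3}{k-1}\Bigl(\binom{n-1}{2}f_1+f_2\Bigr)+\binom{n}{2}\binom{n-3}{k-2}\varphi(S),
\]
because both the $k$-subsets avoiding the merged index and the $(k+1)$-subsets containing it sit on the left-hand (upper-bounded) side of the inductive hypothesis. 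Hence $f_k$ and $f_{k+1}$ enter with the \emph{same} sign, and an upper bound on $f_{k+1}$ tells you nothing about $f_k$: the proposed downward induction on $k$ cannot propagate, since no lower bound on $f_{k+1}$ is available ($\varphi$ is not assumed nonnegative, and the hypothesis only ever bounds such sums from above). Your top case $k=n-1$ suffers from the same defect: summing the three-variable inequality over the triples $(x_i,x_j,S-x_i-x_j)$ yields $f_2+(n-1)f_{n-1}\le (n-1)f_1+f_{n-2}+\binom{n}{2}\varphi(S)$, and the term $f_2$ is stranded on the wrong side with no mechanism for discarding it.

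Both steps can be repaired, but by different devices than the ones you name. For $k=n-1$, merge a \emph{single} pair: apply the $(n-1,n-2)$ case to $(x_1+x_2,x_3,\dots,x_n)$ and add to it the basic inequality for the one triple $(x_1,x_2,x_3+\cdots+x_n)$; the terms $\varphi(x_1+x_2)$ and $\varphi(x_3+\cdots+x_n)$ cancel exactly, leaving $f_{n-1}\le f_1+(n-2)\varphi(S)$. For $2\le k\le n-2$, do not merge at all: apply the $(n-1,k)$ case to each of the $n$ subfamilies $\{x_l:l\ne i\}$ and sum over $i$, which gives $(n-k)f_k\le (n-1)\binom{n-3}{k-1}f_1+\binom{n-3}{k-2}f_{n-1}$; here $f_{n-1}$ lands on the correct (upper-bounding) side, so the already-proved case $k=n-1$ finishes the step, and the identities $(n-1)\binom{n-3}{k-1}+\binom{n-3}{k-2}=(n-k)\binom{n-2}{k-1}$ and $(n-2)\binom{n-3}{k-2}=(n-k)\binom{n-2}{k-2}$ produce exactly the claimed constants. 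So your outer induction on $n$ and the special treatment of $k=n-1$ are the right skeleton, but the engine you propose for both the top case and the descent points the inequality the wrong way.
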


\begin{corollary}
Under the hypotheses of Theorem \emph{\ref{thmVA}},%
\[
\sum\limits_{k=2}^{n-1}\sum\limits_{1\leq{i_{1}}<...<{i_{k}}\leq n}%
{\varphi\left(  {\sum\limits_{j=1}^{k}{x{_{i_{j}}}}}\right)  }\leq\left(
2^{n-2}-1\right)  \left(  \sum\limits_{k=1}^{n}{\varphi\left(  x_{k}\right)
}+{\varphi}\left(  {\sum\limits_{k=1}^{n}}x_{k}\right)  \right)  .
\]

\end{corollary}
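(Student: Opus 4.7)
The plan is to obtain the corollary by summing the inequalities supplied by Theorem \ref{thmVA} over $k=2,3,\ldots,n-1$ and then reading off the resulting binomial coefficient sums as powers of two. Concretely, for each such $k$ Theorem \ref{thmVA} gives
\[
\sum_{1\leq i_{1}<\cdots<i_{k}\leq n}\varphi\!\left(\sum_{j=1}^{k}x_{i_{j}}\right)\leq\binom{n-2}{k-1}\sum_{i=1}^{n}\varphi(x_{i})+\binom{n-2}{k-2}\varphi\!\left(\sum_{i=1}^{n}x_{i}\right),
\]
so summing over $k\in\{2,\ldots,n-1\}$ yields an estimate of the form
\[
\sum_{k=2}^{n-1}\sum_{1\leq i_{1}<\cdots<i_{k}\leq n}\varphi\!\left(\sum_{j=1}^{k}x_{i_{j}}\right)\leq S_{1}\sum_{i=1}^{n}\varphi(x_{i})+S_{2}\,\varphi\!\left(\sum_{i=1}^{n}x_{i}\right),
\]
where $S_{1}=\sum_{k=2}^{n-1}\binom{n-2}{k-1}$ and $S_{2}=\sum_{k=2}^{n-1}\binom{n-2}{k-2}$.

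The second step is to show $S_{1}=S_{2}=2^{n-2}-1$, which is just a reindexing. Setting $m=n-2$ and $j=k-1$ we have $S_{1}=\sum_{j=1}^{m}\binom{m}{j}=2^{m}-\binom{m}{0}=2^{n-2}-1$; setting instead $j=k-2$ we have $S_{2}=\sum_{j=0}^{m-1}\binom{m}{j}=2^{m}-\binom{m}{m}=2^{n-2}-1$. Substituting these common values factorizes the right-hand side of the estimate into exactly the claimed form
\[
(2^{n-2}-1)\left(\sum_{i=1}^{n}\varphi(x_{i})+\varphi\!\left(\sum_{i=1}^{n}x_{i}\right)\right),
\]
which completes the proof.

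There is essentially no obstacle here: the content is packaged inside Theorem \ref{thmVA}, and the corollary is purely a combinatorial bookkeeping fact. The only point that requires a moment of care is to make sure the two binomial sums have the same value and that the index shifts agree with the range $k=2,\ldots,n-1$; once that alignment is verified the corollary is immediate.
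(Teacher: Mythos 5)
Your proposal is correct and is exactly the intended argument: the paper states this corollary without proof, and the evident route is to sum the inequality of Theorem \ref{thmVA} over $k=2,\dots,n-1$ and evaluate $\sum_{k=2}^{n-1}\binom{n-2}{k-1}=\sum_{k=2}^{n-1}\binom{n-2}{k-2}=2^{n-2}-1$, precisely as you do. Your index shifts and the exclusion of $\binom{n-2}{0}$ and $\binom{n-2}{n-2}$ respectively are both verified correctly, so nothing is missing.
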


By combining Theorem \ref{thm_Ress_alpha} and Theorem \ref{thmVA} we infer the
following \emph{fractional} \emph{power inequalities }valid in the context of
real inner product spaces:%
\[
\sum_{1\leq i_{1}<\cdots<i_{k}\leq n}\left\Vert \mathbf{x}_{i_{1}}%
+\cdots+\mathbf{x}_{i_{k}}\right\Vert ^{\alpha}\leq\binom{n-2}{k-2}\left(
\frac{n-k}{k-1}\sum_{k=1}^{n}\left\Vert \mathbf{x}_{k}\right\Vert ^{\alpha
}+\left\Vert \sum_{k=1}^{n}\mathbf{x}_{k}\right\Vert ^{\alpha}\right)
\]
where $1\leq k\leq n-1,$ $n=3,4,5,...$ and $\alpha\in(0,1]$ $).$ This includes
(for $\alpha=1)$ the family of inequalities proved independently by D. Z.
Djokovi\'{c} \cite{Dj} and D. M. Smiley and M. F. Smiley \cite{SmSm}.

\section{Hornich-Hlawka operators?}

It is a common practice in mathematics to study the operator analogs of some
Banach space properties. The study of the Hornich Hlawka inequality leads
naturally to the following class of continuous and linear operators acting on
Banach spaces. A continuous linear operator $T:X\rightarrow Y$ is called a
\emph{Hornich-Hlawka operator} if there exists a constant $C>0$ such that%
\[
\left\Vert Tx+Ty\right\Vert +\left\Vert Ty+Tz\right\Vert +\left\Vert
Tz+Tx\right\Vert \leq\left\Vert T\right\Vert \left(  \left\Vert \mathbf{x}%
\right\Vert +\left\Vert \mathbf{y}\right\Vert +\left\Vert \mathbf{z}%
\right\Vert +\left\Vert \mathbf{x}+\mathbf{y}+\mathbf{z}\right\Vert \right)
\]
for all $x,y,z\in X.$

According to Theorem \ref{thmVA}, the Hornich-Hlawka operators also verify
inequalities of the form%
\[
\sum_{1\leq i_{1}<\cdots<i_{k}\leq n}\left\Vert Tx_{i_{1}}+\cdots+Tx_{i_{k}%
}\right\Vert \leq\binom{n-2}{k-2}\left\Vert T\right\Vert \left(  \frac
{n-k}{k-1}\sum_{k=1}^{n}\left\Vert x_{k}\right\Vert +\left\Vert \sum_{k=1}%
^{n}x_{k}\right\Vert \right)
\]
whenever $1\leq k\leq n-1$ and $n=3,4,5,...~.$

A Banach space $E$ is a Hornich-Hlawka space if the identity of $E$ is a
Hornich-Hlawka operator.

It is straightforward that every bounded linear operator that factors through
a Hornich-Hlawka space is a Hornich-Hlawka operator and the class of
Hornich-Hlawka operators forms an ideal of operators in the sense of Pietsch
\cite{Pietsch}.

It is too early to appreciate the usefulness of the Hornich-Hlavka type
operators, but the following problem deserves to be under attention:

\begin{problem}
Does every Hornich-Hlawka operator admit a factorization through a
Hornich-Hlawka space?
\end{problem}

\section{Further Comments}

We gain more insight into the Hornich-Hlawka inequality by appealing to the
harmonic analysis. The necessary information can be found in the book of C.
Berg, J. P. R. Christensen and P. Ressel \cite{BCR1984}, Chapter 4, Section 6.

For this, suppose that $C$ is a real linear space or a convex cone of it ($C$
may be open). A continuous function $f:C\rightarrow\mathbb{R}$ verifies the
\emph{Hornich-Hlawka functional equation} if%
\[
f\left(  \mathbf{x}\right)  +f\left(  \mathbf{y}\right)  +f\left(
\mathbf{z}\right)  +f\left(  \mathbf{x}+\mathbf{y}+\mathbf{z}\right)  \geq
f\left(  \mathbf{x}+\mathbf{y}\right)  +f\left(  \mathbf{y}+\mathbf{z}\right)
+f\left(  \mathbf{z}+\mathbf{x}\right)  +f(0)
\]
for all $\mathbf{x},\mathbf{y},\mathbf{z}\in C;$ The last term $f(0)$ can be
eliminated by replacing $f$ by $f-f(0)$ (or simply by deleting it if
$f(0)\geq0).$

Among the people who contributed to the study of this functional equation we
mention here T. Popoviciu \cite{Pop1946}, H. S. Witsenhausen \cite{W1973} and
W. Fechner \cite{Fe}.

When $C=(0,\infty)$, then both the completely monotone functions and the
Bernstein functions verifies \emph{Hornich-Hlawka functional equation}. See H.
S. Sendov and R. Zitikis \cite{SZ2014}, Corollary 4.3.

The significance of this functional equation is made clear by its connection
with iterated differences.

The \emph{difference operator} $\Delta_{\mathbf{h}}$ $($of step size
$\mathbf{h}\in C)$ associates to each function $f:E\rightarrow\mathbb{R}$ the
function $\Delta_{\mathbf{h}}f$ defined by the formula%
\[
\left(  \Delta_{\mathbf{h}}\mathbf{f}\right)  (\mathbf{x})=f(\mathbf{x}%
+\mathbf{h})-f(\mathbf{x}),\quad\text{for all }\mathbf{x},\mathbf{h}\in C.
\]
The difference operators are linear and commute to each other,%
\[
\Delta_{\mathbf{h}_{1}}\Delta_{\mathbf{h}_{2}}=\Delta_{\mathbf{h}_{2}}%
\Delta_{\mathbf{h}_{1}},\quad\text{for all }\mathbf{h}_{1},\mathbf{h}_{2}\in
C.
\]

A continuous function $f:C\rightarrow\mathbb{R}$ is called $\emph{n}%
$-\emph{monotone} if $f\geq0$ and%
\[
(-1)^{n}\Delta_{\mathbf{x}_{1}}\Delta_{\mathbf{x}_{2}}\cdots\Delta
_{\mathbf{x}_{n}}f(\mathbf{x})\geq0
\]
for all $\mathbf{x},\mathbf{x}_{1},...,\mathbf{x}_{n}\in C;$ the function
$f:C\rightarrow\mathbb{R}$ is called $n$-\emph{alternating} if it is
continuous and%

\[
(-1)^{n}\Delta_{\mathbf{x}_{1}}\Delta_{\mathbf{x}_{2}}\cdots\Delta
_{\mathbf{x}_{n}}f(\mathbf{x})\leq0
\]
for all $\mathbf{x},\mathbf{x}_{1},...,\mathbf{x}_{n}\in C.$ If $f$ is
$3$-alternating, then%
\begin{multline*}
\Delta_{\mathbf{x}}\Delta_{\mathbf{y}}\Delta_{\mathbf{z}}f(\mathbf{v}%
)=f\left(  \mathbf{x}+\mathbf{v}\right)  +f\left(  \mathbf{y}+\mathbf{v}%
\right)  +f\left(  \mathbf{z}+\mathbf{v}\right)  +f\left(  \mathbf{x}%
+\mathbf{y}+\mathbf{z}+\mathbf{v}\right) \\
-f\left(  \mathbf{x}+\mathbf{y}+\mathbf{v}\right)  -f\left(  \mathbf{y}%
+\mathbf{z}+\mathbf{v}\right)  -f\left(  \mathbf{z}+\mathbf{x}+\mathbf{v}%
\right)  -f(\mathbf{v})\geq0,
\end{multline*}
for all $\mathbf{x},\mathbf{y},\mathbf{z},\mathbf{v}\in C,$ which implies that
$f$ is a solution of the Hornich-Hlawka functional equation. In this way, the
study of this functional equation can be reduced to the study of
$3$-alternating function.

The paper of C. P. Niculescu and S. Sra \cite{NSra2023} includes several
example of functions of several variables that verify the Hornich-Hlawka
functional equation. In particular it is shown that the restriction of the
function $\det$ to the cone \textrm{Sym}$^{+}(n,\mathbb{R})$ of all $n\times
n$-dimensional positive semidefinite matrices has positive differences of any
order. This implies the determinantal Hornich-Hlawka inequality of M.
Lin~\cite{Lin},
\[
\det A+\det B+\det C+\det(A+B+C)\geq\det\left(  A+B\right)  +\det\left(
B+C\right)  +\det\left(  C+A\right)  ,
\]
for all $A,B,C\in\;$\textrm{Sym}$^{+}(n,\mathbb{R}).$

As is well known, the complete metric spaces with a global nonpositive
curvature display many nice properties that make them a natural generalization
of Hilbert spaces. See M. Ba\v{c}ák \cite{Bacak2014} and the references
therein. Except for a paper by D. Serre \cite{Serre}, nothing is known about
the Hornich-Hlawka inequalities in this context. No doubt, this qualifies as a
subject that deserves to be investigated.

\medskip

\noindent\textbf{\noindent Acknowledgement. }The authors would like to thank
\c{S}tefan Cobza\c{s} from University Babe\c{s}-Bolyai, Cluj-Napoca, for many
useful comments on the subject of this paper.

\end{document}